\newcommand{\ra}{\rightarrow}
\newcommand{\CC}{\mathbb C}
\newcommand{\ZZ}{\mathbb Z}
\newcommand{\PP}{\mathbb P}
\newcommand{\e}{\epsilon}
\newcommand{\cF}{\mathcal{F}}
\newcommand{\cU}{\mathcal{U}}
\newcommand{\cO}{\mathcal{O}}
\newcommand{\Pic}{\mbox{Pic}}
\newcommand{\Ext}{\mbox{Ext}}
\newcommand{\Gcd}{\mbox{gcd}}
\newcommand{\End}{\mbox{End}}
\newcommand{\Hom}{\mbox{Hom}}
\newcommand{\rk}{\mbox{rk}}
\theoremstyle{plain}
\newtheorem{theorem}{Theorem}[section]
\newtheorem{lem}[theorem]{Lemma}
\newtheorem{prop}[theorem]{Proposition}
\newtheorem{cor}[theorem]{Corollary}
\newtheorem{rem}[theorem]{Remark}
\begin{document}
\title[Coherent systems]{Hodge polynomials and birational types of moduli spaces of coherent systems on elliptic curves}

\author{H. Lange}
\author{P. E. Newstead}

\address{H. Lange\\Mathematisches Institut\\
              Universit\"at Erlangen-N\"urnberg\\
              Bismarckstra\ss e $1\frac{ 1}{2}$\\
              D-$91054$ Erlangen\\
              Germany}
              \email{lange@mi.uni-erlangen.de}
\address{P.E. Newstead\\Department of Mathematical Sciences\\
              University of Liverpool\\
              Peach Street, Liverpool L69 7ZL, UK}
\email{newstead@liv.ac.uk}

\thanks{Both authors are members of the research group VBAC (Vector Bundles on Algebraic Curves). The second author 
would like to thank the Mathematisches Institut der Universit\"at 
         Erlangen-N\"urnberg for its hospitality}
\keywords{Vector bundle, coherent system, moduli space, elliptic curve, Hodge polynomial}
\subjclass[2000]{Primary: 14H60; Secondary: 14F05, 32L10}

\begin{abstract}
In this paper we consider moduli spaces of coherent systems on an elliptic curve. We compute their Hodge polynomials and 
determine their birational types in some cases. Moreover we prove that certain moduli spaces of coherent systems 
are isomorphic. This last result uses the Fourier-Mukai transform of coherent systems introduced by Hern\'andez Ruip\'erez
and Tejero Prieto.  

\end{abstract}
\maketitle

\section{Introduction}

\noindent
A {\it coherent system of type $(n,d,k)$} on a smooth projective curve $C$ over an algebraically closed field is by 
definition a pair $(E,V)$ consisting of
a vector bundle $E$ of rank $n$ and degree $d$ over $C$ and a vector subspace $V \subset H^0(E)$ of dimension $k$. 
For any real number $\alpha$, the {\it $\alpha$-slope} of a coherent system $(E,V)$ of type $(n,d,k)$ is defined by
$$
\mu_{\alpha}(E,V) := \frac{d}{n} + \alpha \frac{k}{n}.
$$
A {\it coherent subsystem} of $(E,V)$ is a coherent system $(E',V')$ such that $E'$ is a subbundle of $E$ and 
$V' \subset V \cap H^0(E')$.
A coherent system $(E,V)$ is called 
{\it $\alpha$-stable} ({\it $\alpha$-semistable}) if
$$
\mu_{\alpha}(E',V') < \mu_{\alpha}(E,V) \ \ (\mu_{\alpha}(E',V') \le \mu_{\alpha}(E,V))
$$
for every proper coherent subsystem $(E',V')$ of $(E,V)$.
The $\alpha$-stable coherent systems of type $(n,d,k)$ on $X$ form a quasi-projective moduli space which we 
denote by $G(\alpha;n,d,k)$.

In a previous paper \cite{ln} we studied the case of an elliptic curve $C$, determined precisely when $G(\alpha;n,d,k)$ is non-empty and showed 
that in this case it is always smooth and irreducible of the expected dimension
$$
\beta(d,k) := k(d-k) + 1.
$$
 
In this paper we consider again the case where $C$ is elliptic and assume for convenience that the base field is $\CC$. 
After summarizing some properties of the Hodge polynomial $\epsilon_X(u,v)$ of a quasi-projective variety $X$ in section 2, we investigate first 
the spaces $G_0(n,d,k)$, defined by $G_0(n,d,k): = G(\alpha;n,d,k)$ for small positive $\alpha$.
When $\Gcd(n,d) = 1$ we prove in particular that for fixed $d$ and $k$ the birational type (Corollary \ref{cor2.2})
and the Hodge polynomial (Corollary \ref{cor2.3}) of $G_0(n,d,k)$ are independent of the rank $n$.
We give also an alternative proof of a special case of the main result of \cite{ht} saying that $G_0(n',d,k) \simeq G_0(n,d,k)$ when 
$n' \equiv n \mod d$ (see Corollary \ref{cor2.4}). Moreover we show\\

\noindent
{\bf Proposition \ref{prop2.6}.}
	{\it If ${\emph\Gcd}(n,d) = {\emph\Gcd}(n',d) = 1$ and $n' \not \equiv n \mod d$, then $G_0(n',d,1) \not \simeq G_0(n,d,1)$ and 
$G_{0}(n',d,d-1) \not \simeq G_{0}(n,d,d-1)$.}\\ 

In section 4 we show that if $d$ and $\Gcd(n,d)$ are fixed, the birational type of $G_0(n,d,1)$ is independent of $n$.
This improves \cite[Theorem 5.2]{ht} in the case $k=1$. When $\Gcd(n,d) = 2$, we describe in section 5 a stratification of 
$G_0(n,d,1)$ and use it to calculate its Hodge polynomial. We obtain similar results for the corresponding 
moduli spaces $G_0(n,N,1)$ with fixed determinant $N$.\\

\noindent
{\bf Theorem \ref{thm4.9}.}\\
(a)
$\e_{G_0(n,d,1)}(u,v) =$
$$ 
= \frac{(1+u)(1+v)(1-(uv)^{\frac{d}{2}})}{(1-uv)^2(1+uv)}[(u+v)(uv -(uv)^{\frac{d}{2}}) + (1+uv)(1-(uv)^{\frac{d}{2}+1})];
$$
(b)
$\e_{G_0(n,N,1)}(u,v) =$
$$ 
= \frac{1-(uv)^{\frac{d}{2}}}{(1-uv)^2(1+uv)}[(u+v)(uv -(uv)^{\frac{d}{2}}) + (1+uv)(1-(uv)^{\frac{d}{2}+1})].
$$

\vspace*{0.3cm}
In section 6 we allow the parameter $\alpha$ to vary and compute the Hodge polynomial of $G(\alpha;2+ad,d,1)$. We recall that there
are only finitely many distinct moduli spaces as $\alpha$ varies, usually labelled $G_i := G_i(2+ad,d,1)$.\\

\noindent
{\bf Theorem \ref{thm5.6}.}
{\it For $i = 0, \ldots, L$ we have}\\ 
$ \displaystyle{\e_{G_i}(u,v) = (1+u)(1+v)\frac{1-(uv)^d}{1-uv} \;+}$
$$
+ \frac{(1+u)^2(1+v)^2(1-(uv)^{\frac{d-\gamma}{2} -i)}}{(1-uv)^2(1-(uv)^2)} 
(uv - (uv)^{\gamma + 2i})(1 - (uv)^{\frac{d-\gamma}{2}-i+1}),
$$
{\it where $\gamma$ is $1$ if $d$ is odd and $2$ if $d$ is even.}

\vspace*{0.3cm}
We note that these Hodge polynomials are independent of $a$. When $i=0$ it is in fact known (see \cite{ht}) that the spaces 
$G_0(2+ad,d,1)$ are all isomorphic for fixed $d$. Our theorem provides evidence that this result may extend to arbitrary $i$
and we prove this in section 7. In section 8 we investigate further the birational type of $G(\alpha;n,d,k)$. We show
that, if $\gcd(n,d)=1$ and $k\le d$ or $\Gcd(n,d) = 2$ and $k=1$ or $\gcd(n-k,d)=1$ and $k<\min(d,n)$, the variety is birational to $\PP^{k(d-k)} \times C$ (Propositions \ref{prop8.1}, \ref{prop6.1}, \ref{prop8.3}). Finally, if $k<d$, $\gcd(n,d)=h>1$ and $S^hC$ denotes the $h$-fold symmetric product of $C$, we prove
\\

\noindent{\bf Theorem \ref{th8.4}.}
{\it For all $\alpha$ for which $G(\alpha;n,d,k)\ne\emptyset$,\\
\emph{(1)} $G(\alpha;n,N,k)$ is birational to a variety $Y_N$, where $Y_N$ is fibred over $\PP^{h-1}$ with general fibre unirational.\\
\emph{(2)} $G(\alpha;n,d,k)$ is birational to a variety $Y$, where $Y$ is fibred over  $S^hC$ with general fibre unirational.}
\\

We are grateful to the referee for a careful reading of the paper.

\section{Hodge polynomials}

We recall the basic properties of Hodge polynomials as defined by Deligne in \cite{d}. 
For any quasi-projective variety $X$ over the field of complex numbers, Deligne defined a mixed Hodge structure on the cohomology groups
$H^k_c(X,\CC)$ with compact support with associated Hodge polynomial $\epsilon_X(u,v)$. When $X$ is a smooth projective
variety, we have 
$$
\epsilon_X(u,v) = \sum_{p,q} h^{p,q}(X)u^pv^q,
$$
where $h^{p,q}(X)$ are the usual Hodge numbers. In particular, in this case  
$\epsilon_X(u,u)$ is the usual Poincar\'e polynomial
$P_X(u)$. We need only the following properties of the Hodge polynomials 
(see \cite{d} and \cite[Theorem 2.2 and Lemmas 2.3 and 2.4]{mov}). 

\begin{itemize}
\item If $X$ is a finite disjoint union $X = \sqcup_i X_i$ of locally closed subvarieties $X_i$, then
$$
\epsilon_X = \sum_{i} \epsilon_{X_i}.
$$
\item If $Y \ra X$ is an algebraic fibre bundle with fibre $F$ which is locally trivial in the Zariski topology, then 
$$
\epsilon_Y = \epsilon_F \cdot \epsilon_X.
$$
\item If $Y \ra X$ is a map between quasi-projective varieties which is a locally trivial fibre bundle 
in the complex topology with fibres projective spaces $F = \PP^N$ for some $N > 0$, then 
$$
\e_Y = \e_F \cdot \e_X.
$$ 
\end{itemize}

Moreover we need the Hodge polynomials of the Grassmannians. In fact,
\begin{equation} \label{eqn5.1}
\epsilon_{{\rm Gr}(r,N)}(u,v) = \frac{(1 - (uv)^{N-r+1})(1- (uv)^{N-r+2}) \cdots (1- (uv)^{N})}{(1-uv)(1-(uv)^2) \cdots (1-(uv)^r)}.
\end{equation}

\section{$G_0(n,d,k)$ for coprime $n$ and $d$}

Let $C$ be an elliptic curve defined over $\CC$ and suppose that $n,d,k$ are integers with $n\ge2$, $1\le k\le d$. 
Let $G_0(n,d,k)$ denote the moduli space of coherent systems of type $(n,d,k)$ which are $\alpha$-stable for small positive $\alpha$ (we call such systems $0^+$-{\it stable}). Similarly for any line bundle $N$ of degree $d$ on $C$ let $G_0(n,N,k)$
denote the corresponding moduli space with fixed determinant $N$.

In this section we assume $\Gcd(n,d) =1$. Then there exists a Poincar\'e bundle $\cU$ on $C \times C$. It has the property that 
$\cU|_{C \times \{E\}} \simeq E$ for all stable bundles $E$ of rank $n$ and degree $d$, where we identify the moduli space of 
stable bundles of type $(n,d)$ with the curve $C$ in the usual way (see \cite{at} and \cite{tu}). 
Let $p_i$ denote the $i$-th projection of $C \times C$.

\begin{prop} \label{prop2.1} If $\emph{\Gcd}(n,d) = 1$, then\\
\emph{(1)} $G_0(n,N,k)$ is isomorphic to the Grassmannian ${\rm Gr}(k,d)$;\\
\emph{(2)} $G_0(n,d,k)$ is a ${\rm Gr}(k,d)$-bundle over $C$.
\end{prop}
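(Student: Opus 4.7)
The plan is to first reduce the $0^+$-stability condition to ordinary stability of the underlying bundle, and then realize the moduli space as a (fibre-wise) Grassmannian of global sections. Throughout I use that for a stable bundle $E$ of rank $n$ and degree $d>0$ on the elliptic curve $C$ we have $H^1(E)=0$ (by Serre duality, since $E^*$ is stable of negative degree), whence $h^0(E)=d$ by Riemann--Roch.

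The first step is to observe that, under the hypothesis $\Gcd(n,d)=1$, a coherent system $(E,V)$ of type $(n,d,k)$ is $0^+$-stable if and only if $E$ is a stable bundle. Indeed, if $E$ is stable then every proper subbundle $E'$ satisfies $\mu(E')<\mu(E)$, so $\mu_{\alpha}(E',V')<\mu_{\alpha}(E,V)$ for all sufficiently small $\alpha>0$ and all $V'$; the remaining case $E'=E$, $V'\subsetneq V$ is automatic. Conversely, if $E$ is not stable (equivalently not semistable, since $\Gcd(n,d)=1$) any destabilising subbundle together with $V':=V\cap H^0(E')$ destabilises $(E,V)$ for small $\alpha>0$.

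For part (1), the classical result of Atiyah/Tu recalled before the proposition gives a unique stable bundle $E$ with $\det E\cong N$, and since $h^0(E)=d$ the first step identifies $G_0(n,N,k)$ set-theoretically with ${\rm Gr}(k,H^0(E))\cong{\rm Gr}(k,d)$. To upgrade this to an isomorphism of varieties, I would exhibit a universal family on ${\rm Gr}(k,d)$: take the tautological subbundle of the trivial bundle ${\rm Gr}(k,d)\times H^0(E)$ and pair it with the pullback of $E$ to $C\times{\rm Gr}(k,d)$, then invoke the universal property of $G_0(n,N,k)$ to produce the inverse morphism.

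For part (2), form the sheaf $\cW:=p_{2*}\cU$ on $C$, where $\cU$ is the Poincar\'e bundle on $C\times C$. Cohomology and base change (using that the fibres $\cU|_{C\times\{E\}}\simeq E$ are stable with $H^1=0$) shows that $R^1p_{2*}\cU=0$ and that $\cW$ is a vector bundle of rank $d$ whose fibre at $[E]$ is canonically $H^0(E)$. The relative Grassmannian ${\rm Gr}(k,\cW)\to C$ then carries a tautological family of coherent systems of type $(n,d,k)$, each $0^+$-stable by the first step, and hence induces a morphism ${\rm Gr}(k,\cW)\to G_0(n,d,k)$. This map is bijective on closed points by part (1) applied fibrewise, and one checks it is an isomorphism (both sides are smooth by \cite{ln}, so it suffices to check bijectivity on tangent spaces, or alternatively to exhibit an inverse via the universal coherent system on $G_0(n,d,k)$). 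Since ${\rm Gr}(k,\cW)$ is Zariski-locally trivial over $C$ with fibre ${\rm Gr}(k,d)$, this proves (2).

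The one technical point that requires care is the moduli-theoretic identification in the last paragraph: strictly speaking one needs that the universal coherent system on the relative Grassmannian has the correct universal property with respect to $0^+$-stable families, and that $G_0(n,d,k)$ is a fine enough moduli space to receive the induced classifying map. Since $\Gcd(n,d)=1$ ensures the existence of a universal (Poincar\'e) bundle, this causes no difficulty; the rest is formal.
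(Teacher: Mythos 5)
Your argument is correct and follows essentially the same route as the paper: reduce $0^+$-stability to stability of $E$ (using $\Gcd(n,d)=1$), invoke the uniqueness of the stable bundle with fixed determinant to get ${\rm Gr}(k,H^0(E))\cong{\rm Gr}(k,d)$, and globalize via the relative Grassmannian of $p_{2*}\cU$. You simply make explicit several points the paper leaves implicit (the equivalence of $0^+$-stability with stability, $h^0(E)=d$, and the base-change and universal-family details), so there is nothing further to add.
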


\begin{proof}
(1) For $\Gcd(n,d) = 1$ any semistable vector bundle on $C$ is stable. 
For a fixed $N \in \Pic^{d}(C)$ there is a unique stable bundle $E$ of rank $n$ 
and determinant $N$ (this follows from \cite[Theorem 7]{at}). Then $(E,V)$ belongs to $G_0(n,N,k)$ for any $k$-dimensional subspace $V$ of $H^0(E)$.

(2) From part (1) we conclude that $G_0(n,d,k)$ can be identified with the Grassmannian bundle of $k$-planes in the fibres of 
the rank-$d$ vector bundle $p_{2*}\cU$ on $C$.
\end{proof}

\begin{cor} \label{cor2.2}
The birational type of $G_0(n,d,k)$ is independent of $n$ provided $\emph{\Gcd}(n,d) = 1$.
\end{cor}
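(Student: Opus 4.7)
The plan is to reduce the statement to the fact that every vector bundle on a smooth curve is Zariski-locally trivial on a dense open subset, so the associated Grassmannian bundle is birationally a product. Concretely, by Proposition~\ref{prop2.1}(2) the space $G_0(n,d,k)$ is the Grassmannian bundle $\mathrm{Gr}(k,p_{2*}\cU)$ on $C$ attached to the rank-$d$ vector bundle $E_n := p_{2*}\cU$, where $\cU$ is the Poincar\'e bundle associated to the coprime pair $(n,d)$. To prove the corollary it suffices to exhibit, for each $n$ coprime to $d$, a non-empty Zariski open $U_n \subset C$ and an isomorphism $G_0(n,d,k)|_{U_n} \cong U_n \times \mathrm{Gr}(k,d)$; then both sides are birational to $C\times \mathrm{Gr}(k,d)$, which is manifestly independent of $n$.

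The first step is to fix a point $p\in C$ and pick sections $s_1,\dots,s_d$ of $E_n$ on some affine Zariski neighbourhood $V$ of $p$ whose values at $p$ form a basis of the fibre $(E_n)_p$. The wedge $s_1\wedge\cdots\wedge s_d$ is a section of $\det(E_n)$ on $V$ which is nonzero at $p$, hence its zero locus $Z\subset V$ is a proper closed subset. On $U_n := V\setminus Z$ the sections $s_1,\dots,s_d$ give a trivialisation $E_n|_{U_n}\cong \cO_{U_n}^{\oplus d}$.

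The second step is to transfer this trivialisation to the Grassmannian bundle: the construction $\mathrm{Gr}(k,-)$ commutes with base change and with the formation of an associated bundle from a frame, so a trivialisation of $E_n$ on $U_n$ induces a natural isomorphism of $\mathrm{Gr}(k,E_n)|_{U_n}$ with the trivial Grassmannian bundle $U_n\times \mathrm{Gr}(k,d)$. This identifies $G_0(n,d,k)|_{U_n}$ with $U_n\times \mathrm{Gr}(k,d)$, and since $U_n$ is dense in $C$, we conclude that $G_0(n,d,k)$ is birational to $C\times \mathrm{Gr}(k,d)$. As the latter is independent of $n$, the birational type of $G_0(n,d,k)$ depends only on $d$ and $k$.

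There is no real obstacle: the only subtlety is making sure the local trivialisation of the vector bundle $p_{2*}\cU$ does produce a birational product of the full Grassmannian bundle, but this is automatic from functoriality of $\mathrm{Gr}(k,-)$. Everything else is standard. Note that we do not need any statement about the isomorphism class of $p_{2*}\cU$ as $n$ varies; the point is only that any rank-$d$ bundle on a curve is generically trivial.
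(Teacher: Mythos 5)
Your proof is correct and is essentially the paper's own argument: the paper's proof is the one-line observation that the $\mathrm{Gr}(k,d)$-bundle over $C$ from Proposition~\ref{prop2.1}(2) is Zariski locally trivial, which is exactly the content you spell out via the generic trivialisation of $p_{2*}\cU$. Your elaboration of why a rank-$d$ bundle on a curve is generically trivial and why this passes to the associated Grassmannian bundle is just a more detailed rendering of the same step.
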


\begin{proof}
Observe that the Gr$(k,d)$-bundle over $C$ of the proposition is Zariski locally trivial.
\end{proof}

This improves the statement in \cite{ht} that for fixed $d$ and $k$ there are at most $d$ different birational types of
varieties $G(\alpha;n,d,k)$, since we have by \cite[Theorem 4.4 (ii)]{ln} that the birational type is independent of $\alpha$
in this case.

Moreover we conclude that the Hodge polynomial of $G_0(n,d,k)$ is given by

\begin{cor} \label{cor2.3}
Suppose $\emph{\Gcd}(n,d) = 1$. Then
$$
\e_{G_0}(u,v) = \frac{(1 - (uv)^{d-k+1})(1-(uv)^{d-k+2}) \cdots (1-(uv)^d)}
{(1-uv)(1-(uv)^2) \cdots (1-(uv)^k)} (1+u)(1+v).
$$
\begin{flushright} $\square$ \end{flushright} 
\end{cor}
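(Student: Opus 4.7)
The plan is a direct computation built on Proposition \ref{prop2.1}(2) together with the multiplicativity property of Hodge polynomials.

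First, I would invoke Proposition \ref{prop2.1}(2) to present $G_0(n,d,k)$ as a ${\rm Gr}(k,d)$-bundle over the elliptic curve $C$, namely the Grassmannian bundle of $k$-planes in the fibres of the rank-$d$ vector bundle $p_{2*}\cU$. The key observation, already used in the proof of Corollary \ref{cor2.2}, is that this bundle is Zariski locally trivial on $C$ (since any vector bundle on a curve is Zariski locally trivial, and the associated Grassmannian bundle inherits this property).

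Next I would apply the second bulleted property of Hodge polynomials from section 2, which yields
$$
\e_{G_0(n,d,k)}(u,v) = \e_{{\rm Gr}(k,d)}(u,v)\cdot\e_C(u,v).
$$
The first factor is given by formula \eqref{eqn5.1} with $r=k$ and $N=d$. For the second factor, $C$ is a smooth projective curve of genus $1$, so $h^{0,0}=h^{1,1}=1$ and $h^{1,0}=h^{0,1}=1$, giving $\e_C(u,v)=1+u+v+uv=(1+u)(1+v)$.

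Substituting these two expressions into the product yields precisely the formula in the corollary. There is essentially no obstacle here: the content is entirely in Proposition \ref{prop2.1} and the basic properties of Hodge polynomials recalled in section 2. The only point worth verifying carefully is the Zariski-local triviality needed to apply the multiplicativity, but this is immediate since we are dealing with the Grassmannian bundle associated to an honest vector bundle on a curve.
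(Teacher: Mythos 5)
Your proof is correct and is exactly the argument the paper intends: Proposition \ref{prop2.1}(2) plus the Zariski-local triviality noted in Corollary \ref{cor2.2}, the multiplicativity of Hodge polynomials, formula \eqref{eqn5.1}, and $\e_C(u,v)=(1+u)(1+v)$. Nothing is missing.
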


\begin{lem} \label{lem2.3}
$p_{2*}\cU$ is a stable vector bundle of rank $d$ on $C$.
\end{lem}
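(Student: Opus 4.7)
The plan is to handle rank and stability separately, with the Fourier–Mukai formalism doing the work for the latter.

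For the rank, I would first note that any stable bundle $E$ of rank $n$ and degree $d\ge 1$ on the elliptic curve $C$ has $h^1(E)=0$: since $\omega_C$ is trivial, Serre duality gives $H^1(E)\cong H^0(E^\vee)^*$, and $E^\vee$ is stable of negative slope, hence has no global sections. Riemann--Roch then yields $h^0(E)=d$. By cohomology and base change, $R^1 p_{2*}\mathcal{U}=0$ and $p_{2*}\mathcal{U}$ is locally free of rank $d$ on $C$.

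For stability, the idea is to recognise $p_{2*}\mathcal{U}$ as the Fourier--Mukai image of a stable sheaf. Let $\Phi_{\mathcal{U}}\colon D^b(C)\to D^b(C)$ denote the integral transform $\Phi_{\mathcal{U}}(F)=Rp_{2*}(p_1^*F\otimes \mathcal{U})$ used by Hern\'andez Ruip\'erez and Tejero Prieto in \cite{ht}. The rank calculation shows $\Phi_{\mathcal{U}}(\mathcal{O}_C)\simeq p_{2*}\mathcal{U}$, concentrated in degree $0$. Because $\gcd(n,d)=1$, the universal bundle $\mathcal{U}$ is strongly simple, so $\Phi_{\mathcal{U}}$ is an equivalence of derived categories; and such equivalences preserve (semi)stability of coherent sheaves on an elliptic curve up to a shift. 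Applying this to $\mathcal{O}_C$, which is trivially stable of rank $1$ and degree $0$, yields stability of $p_{2*}\mathcal{U}$.

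The main obstacle is the stability-preservation statement itself. One must verify that the slope $0$ of $\mathcal{O}_C$ is not the exceptional slope for $\Phi_{\mathcal{U}}$ (so that the image lies in a single degree, as already guaranteed by the rank computation), and that strict stability rather than mere semistability is preserved. The latter is automatic because $\mathcal{O}_C$ is simple and an equivalence preserves simplicity, while the former follows at once from the explicit identification $\Phi_{\mathcal{U}}(\mathcal{O}_C)\simeq p_{2*}\mathcal{U}$. The cleanest route is simply to invoke the corresponding result of \cite{ht}, developed precisely for this situation.
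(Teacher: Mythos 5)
Your proposal is correct and follows essentially the same route as the paper: the paper identifies $p_{2*}\cU$ with $\Phi_{\cU}(\cO_C)$ and cites the stability-preservation results of \cite{ht} (Proposition 2.8 and Remark 2.9), exactly as you do. The only difference is that you spell out the rank computation via Serre duality and Riemann--Roch, which the paper leaves implicit; that detail is accurate and harmless.
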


\begin{proof}
According to \cite{ht}, $\cU$ is the kernel of a Fourier-Mukai transform $\Phi_{\cU}$ on $C \times C$.
Hence $p_{2*}\cU = \Phi_{\cU}(\cO_C)$ is stable by \cite[Proposition 2.8 and Remark 2.9]{ht}.
\end{proof}

The following corollary is a special case of the main result of \cite{ht}.
\begin{cor} \label{cor2.4}
If $\emph{\Gcd}(n,d) = 1$ and $n' \equiv n \mod d$, then $G_0(n',d,k)$ is isomorphic to $G_0(n,d,k)$.
\end{cor}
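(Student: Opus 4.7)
My plan is to combine Proposition~\ref{prop2.1}(2), Lemma~\ref{lem2.3}, and Atiyah's classification of vector bundles on an elliptic curve to reduce the claimed isomorphism to a congruence of degrees modulo $d$, which is then verified by a Grothendieck--Riemann--Roch computation on $p_2 \colon C \times C \to C$.

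By Proposition~\ref{prop2.1}(2) applied to both $n$ and $n'$, $G_0(n,d,k) = \mathrm{Gr}(k, p_{2*}\cU_n)$ and $G_0(n',d,k) = \mathrm{Gr}(k, p_{2*}\cU_{n'})$, where $\cU_n, \cU_{n'}$ denote the Poincar\'e bundles for the respective stable moduli (they exist since $\Gcd(n,d) = \Gcd(n',d) = 1$). The relative Grassmannian is unchanged when the underlying bundle is tensored with a line bundle, i.e.\ $\mathrm{Gr}(k, E) \simeq \mathrm{Gr}(k, E \otimes L)$ for every $L \in \Pic(C)$, since tensoring by $L$ identifies $k$-planes fibrewise. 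Hence it suffices to prove
\[
p_{2*}\cU_{n'} \simeq p_{2*}\cU_n \otimes L
\]
for some line bundle $L$ on $C$.

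By Lemma~\ref{lem2.3}, both bundles are stable of rank $d$ on the elliptic curve $C$. Atiyah's classification implies that two stable bundles of rank $d$ on $C$ differ by tensoring with a line bundle if and only if their degrees agree modulo $d$. Thus the corollary reduces to the congruence
\[
\deg(p_{2*}\cU_n) \equiv \deg(p_{2*}\cU_{n'}) \pmod{d}
\]
whenever $n \equiv n' \pmod d$.

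To establish this I would apply Grothendieck--Riemann--Roch to $p_2$: since $\mathrm{td}(C) = 1$, one has $\mathrm{ch}(p_{2*}\cU_n) = p_{2*}\mathrm{ch}(\cU_n)$. Writing $c_1(\cU_n) = a\omega + b\eta + c\Delta$ in the N\'eron--Severi basis of $C \times C$ (for generic $C$, where $\omega, \eta$ are the fibre classes of $p_1, p_2$ and $\Delta$ is the diagonal), the restriction $\cU_n|_{C \times \{y\}}$ of degree $d$ forces $a + c = d$, while the normalisation freedom $\cU_n \mapsto \cU_n \otimes p_i^* L_i$ shifts $\deg(p_{2*}\cU_n)$ only by multiples of $d$ and so is harmless. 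The push-forward of $\mathrm{ch}_2(\cU_n) = (c_1(\cU_n)^2 - 2 c_2(\cU_n))/2$ then expresses $\deg(p_{2*}\cU_n)$ in terms of $c$ and the integer $e$ defined by $c_2(\cU_n) = e \cdot [\mathrm{pt}]$. The main obstacle is to control $c$ and $e$ modulo $d$ as $n$ varies in its residue class: this requires following Atiyah--Tu's inductive construction of $\cU_n$ by successive extensions of simpler bundles, so that the Chern classes can be tracked step by step and shown to produce a residue $\deg(p_{2*}\cU_n) \pmod d$ that depends only on $n \pmod d$, whence the corollary.
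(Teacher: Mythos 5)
Your reduction is exactly the paper's: identify $G_0(n,d,k)$ with the relative Grassmannian of $p_{2*}\cU_n$ via Proposition~\ref{prop2.1}(2), invoke Lemma~\ref{lem2.3} for stability, use Atiyah's classification to reduce the isomorphism $p_{2*}\cU_{n'}\simeq p_{2*}\cU_n\otimes L$ to a congruence of degrees modulo $d$, and note that relative Grassmannians are insensitive to twisting by a line bundle. All of that is sound (and the observation that a stable rank-$d$ bundle automatically has degree coprime to $d$, so that the $\Pic^0(C)$-action by twisting is transitive on stable bundles of fixed rank and degree, is correctly used).

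The genuine gap is that you never prove the congruence $\deg(p_{2*}\cU_n)\equiv\deg(p_{2*}\cU_{n'})\pmod d$, which is the entire content of the corollary once the reduction is made. Your Grothendieck--Riemann--Roch setup expresses $\deg(p_{2*}\cU_n)$ in terms of Chern numbers $c$ and $e$ of the Poincar\'e bundle that are never tied to $n$; you then explicitly defer the computation (``the main obstacle is to control $c$ and $e$ modulo $d$ \ldots by following Atiyah--Tu's inductive construction''). The paper closes this step by citing \cite[Proposition 7.2]{ln}, which gives $c_1(p_{2*}\cU_n)=s[C]$ with $sn\equiv -1\pmod d$; hence $\deg(p_{2*}\cU_n)\equiv -n^{-1}\pmod d$ visibly depends only on the residue of $n$ modulo $d$, and the congruence follows at once. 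Either quote that result or carry out the Chern class computation you sketch; as written, the proof restates the problem at its crux rather than solving it.
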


\begin{proof}
Let $\cU'$ be a Poincar\'e bundle  for $(n',d)$ on $C \times C$. Then by \cite[Proposition 7.2]{ln}, 
$$
c_1(p_{2*}\cU) = s[C] \quad \mbox{and} \quad c_1(p_{2*}\cU') = s'[C]
$$
where $sn \equiv s'n' \equiv -1 \mod d$. Since $n' \equiv n \mod d$, it follows that
$s \equiv s' \mod d$. From Lemma \ref{lem2.3} and the classification of stable bundles on an elliptic curve we conclude that 
$$
p_{2*} \cU \simeq p_{2*} \cU' \otimes M
$$
with $M \in \Pic(C)$. Hence $P(p_{2*}\cU) \simeq P(p_{2*} \cU')$ and the same holds 
for the Grassmannian fibrations. Now the assertion follows from the description of $G_0(n,d,k)$ in the proof of Proposition \ref{prop2.1}.
\end{proof}

Now suppose $\Gcd(n',d) =1$ and $n' \not \equiv n \mod d$. Then in the above argument 
$s' \not \equiv s \mod d$. So the projective bundles $P(p_{2*} \cU)$ and $P(p_{2*} \cU')$ are not 
isomorphic as projective bundles and therefore also not isomorphic as varieties by the argument in \cite[Proposition 8.2]{ln}.
This implies

\begin{prop} \label{prop2.6}
If $\emph{\Gcd}(n,d) = \emph{\Gcd}(n',d) = 1$ and $n' \not \equiv n \mod d$, then $G_0(n',d,1) \not \simeq G_0(n,d,1)$ and 
$G_{0}(n',d,d-1) \not \simeq G_{0}(n,d,d-1)$.  $\square$  
\end{prop}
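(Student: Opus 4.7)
The plan is to exploit the identifications $G_0(n,d,1) \simeq \PP(p_{2*}\cU)$ and $G_0(n,d,d-1) \simeq \PP((p_{2*}\cU)^*)$ as projective bundles over $C$, which follow directly from Proposition \ref{prop2.1}(2) by observing that $\mbox{Gr}(1,d) = \PP^{d-1}$ and $\mbox{Gr}(d-1,d) = \PP^{d-1}$ with the latter realised as the projectivisation of the dual (a $(d-1)$-dimensional subspace of $H^0(E)$ is a line in $H^0(E)^*$). Once these identifications are in place, the $k=1$ and $k=d-1$ cases reduce to showing that $\PP(p_{2*}\cU)$ and $\PP(p_{2*}\cU')$ (respectively their duals) are not isomorphic as varieties.

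Next I would combine Lemma \ref{lem2.3} with the Chern class calculation from \cite[Proposition 7.2]{ln} recalled in the proof of Corollary \ref{cor2.4}: both $p_{2*}\cU$ and $p_{2*}\cU'$ are stable bundles of rank $d$ with $c_1(p_{2*}\cU) = s[C]$ and $c_1(p_{2*}\cU') = s'[C]$, where $sn \equiv s'n' \equiv -1 \pmod d$. The hypothesis $n' \not\equiv n \pmod d$ then forces $s' \not\equiv s \pmod d$. For stable bundles of rank $d$ on an elliptic curve, Atiyah's classification shows that two such bundles are related by twisting with a line bundle if and only if their first Chern classes agree modulo $d$; equivalently, their projectivisations agree as projective bundles over $C$ if and only if $c_1 \equiv c_1' \pmod d$. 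Thus $\PP(p_{2*}\cU)$ and $\PP(p_{2*}\cU')$ are genuinely distinct projective bundles over $C$, and likewise for their duals.

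The last step is to upgrade this non-isomorphism of projective bundles to a non-isomorphism of abstract varieties, and this is where the argument of \cite[Proposition 8.2]{ln} is invoked. The point is that any variety isomorphism $\PP(p_{2*}\cU) \to \PP(p_{2*}\cU')$ would have to respect the underlying $\PP^{d-1}$-bundle structure over $C$, because the base curve and the ruling can be recovered intrinsically from the variety (for instance via its Albanese map, since the ruling fibres are rational). Granted this intrinsic recognition, a variety isomorphism would descend to an isomorphism of projective bundles over $C$, contradicting Step two.

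The main obstacle is the third step: the intrinsic recovery of both the curve $C$ and the projective bundle structure from the abstract variety. The first two steps are essentially bookkeeping on top of Proposition \ref{prop2.1}, Lemma \ref{lem2.3} and the Chern class formula, whereas verifying that no exotic variety isomorphism can swap the two bundles requires the birational/projective geometry input from \cite[Proposition 8.2]{ln}. With that reference in hand the argument closes, and the same reasoning handles $k=d-1$ verbatim by replacing $p_{2*}\cU$ with its dual.
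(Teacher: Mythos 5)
Your proposal is correct and follows essentially the same route as the paper: the paper likewise deduces $s'\not\equiv s \bmod d$ from the Chern class formula of \cite[Proposition 7.2]{ln}, concludes that $P(p_{2*}\cU)$ and $P(p_{2*}\cU')$ are non-isomorphic as projective bundles, and then cites the argument of \cite[Proposition 8.2]{ln} to rule out an abstract variety isomorphism. The only (harmless) difference is that you spell out the $k=d-1$ case via the dual bundle, which the paper leaves implicit.
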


\section{$G_0(n,d,1)$ for arbitrary $(n,d)$}

In this section we determine the birationality type of the moduli space $G_0(n,d,1)$ for arbitrary $n$ and $d$.
Suppose $h :=\Gcd(n,d)$. Then any semistable vector bundle $E$ of rank $n$ and degree $d$ on $C$ is of the form
$E = E_1 \oplus \cdots \oplus E_{\ell}$ with $E_i$ indecomposable of slope $\frac{d}{n}$ and $\ell \leq h$.
Each $E_i$ is semistable and is the unique indecomposable multiple extension of $\frac{rk\; E_i}{n/h}$ copies of 
a stable bundle $F$ of rank $\frac{n}{h}$
and degree $\frac{d}{h}$. Moreover a generic such $E$ has the form 
\begin{equation} \label{eqn1}
E = F_1 \oplus \cdots \oplus F_h
\end{equation}
with all $F_i$ stable and non-isomorphic of rank $\frac{n}{h}$ and degree $\frac{d}{h}$ (see \cite{tu}).

A generic $(E,V) \in G_0(n,d,1)$ has $E= F_1 \oplus \cdots \oplus F_h$ as a bundle and $V$ is a one-dimensional subspace of 
$H^0(E)$ whose projection to each $F_i$ is still one-dimensional. Denote by $G_0^1$ the open set of coherent 
systems in $G_0(n,d,1)$ which are generic in this sense.

So we get a $(\times_{i=1}^h \PP^{\frac{d}{h}-1})$-bundle over $\times_{i=1}^h C \;\setminus \;\Delta$
where
$$
\Delta := \{(x_1, \cdots , x_h) \in \times_{i=1}^h C\; | \; x_i = x_j \; \mbox{for some} \; i \neq j \}.
$$
To be more precise, we think of $\times_{i=1}^h C$ as the set of $h$-tuples of stable bundles of rank $\frac{n}{h}$ and 
degree $\frac{d}{h}$. If $\cU$ denotes a Poincar\'e bundle on $C \times C$ of rank $\frac{n}{h}$ and degree $\frac{d}{h}$ and 
$p_2$ the second projection of $C \times C$, denote
$$
\cF := \times_{i=1}^h P(p_{2*} \cU)|_{\times_{i=1}^h C \,\setminus \,\Delta}.
$$
With these notations the following proposition is obvious.

\begin{prop} \label{prop3.1}
There is a natural action of the symmetric group $S(h)$ on $\cF$ such that the quotient is isomorphic 
to the open subset $G_0^1$ of $G_0(n,d,1)$, 
$$
G_0^1 \simeq \cF/S(h).
$$
\begin{flushright}  $\square$ \end{flushright}
\end{prop}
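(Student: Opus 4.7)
The plan is to exhibit a natural $S(h)$-equivariant morphism $\Phi:\cF\to G_0(n,d,1)$ with image $G_0^1$ and fibres precisely the $S(h)$-orbits. Combined with freeness of the $S(h)$-action, this will yield the claimed isomorphism $G_0^1\cong\cF/S(h)$.

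First I would define the action. The group $S(h)$ acts on $B:=(\times_{i=1}^hC)\setminus\Delta$ by permuting coordinates, and this action lifts to a free action on $\cF$ by simultaneously permuting the $h$ projective-bundle factors with the base; freeness holds because points of $B$ have pairwise distinct coordinates.

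Next I would construct $\Phi$. A closed point of $\cF$ is a tuple of pairwise non-isomorphic stable bundles $F_1,\ldots,F_h$ of rank $n/h$ and degree $d/h$ together with lines $L_i\subset H^0(F_i)$. Set $E:=F_1\oplus\cdots\oplus F_h$ and, choosing nonzero $v_i\in L_i$, let $V:=\langle v_1+\cdots+v_h\rangle\subset H^0(E)$. Stability of the $F_i$ combined with Krull--Schmidt yields $\End(E)\cong\CC^h$ and $\mbox{Aut}(E)\cong(\CC^*)^h$ acting diagonally on $E$; different choices of the $v_i$ therefore produce subspaces $V$ related by this automorphism action and hence define the same isomorphism class of coherent system $(E,V)$. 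To upgrade this assignment to a morphism, I would construct a family of $0^+$-stable coherent systems over $\cF$ using $\bigoplus_i(\mathrm{id}_C\times q_i)^*\cU$ on $C\times B$ (with $q_i:B\to C$ the $i$-th projection), together with a ``diagonal'' line built \'etale-locally from the $h$ tautological line subbundles of the projective factors; the universal property of $G_0(n,d,1)$ then produces $\Phi$.

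Finally I would verify $S(h)$-invariance, which is immediate from the construction, and that the fibres of $\Phi$ are precisely the $S(h)$-orbits: surjectivity onto $G_0^1$ is built into the definition of $G_0^1$, while if two tuples yield isomorphic coherent systems then Krull--Schmidt recovers the unordered set $\{F_i\}$ from $E$ and comparing the projections of $V$ to each summand recovers the unordered set $\{L_i\}$. The main subtlety is the construction of $\Phi$ as a morphism, since there is no canonical global ``diagonal'' line on $\cF$ because of the $\mbox{Aut}(E)$-ambiguity; one must work \'etale-locally and check that the resulting local families glue into a well-defined morphism to the coarse moduli space. Everything else is a routine consequence of Krull--Schmidt together with the classification of stable bundles on the elliptic curve.
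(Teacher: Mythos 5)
Your argument is correct and coincides with what the paper intends: the paper gives no proof at all, declaring the proposition obvious after exactly the same set-up (a generic $(E,V)$ is an unordered $h$-tuple of non-isomorphic stable summands $F_i$ together with a line in each $H^0(F_i)$, the choice of generators $v_i$ being absorbed by $\mathrm{Aut}(E)\cong(\CC^*)^h$). Your write-up simply makes explicit the points the authors leave implicit, including the one genuine subtlety you correctly flag (constructing the classifying morphism $\cF\to G_0(n,d,1)$ \'etale-locally, since the $\mathrm{Aut}(E)$-ambiguity prevents a global choice of diagonal line), plus the routine check that the resulting $(E,V)$ are indeed $0^+$-stable because every proper subbundle of $E$ of the same slope is a partial sum $\oplus_{i\in S}F_i$ and $V$ projects nontrivially to each complementary factor.
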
 

From this we conclude

\begin{prop} \label{prop3.2}
The birational type of $G_0(n,d,1)$ is independent of $n$ provided $d$ and $\emph{\Gcd}(n,d)$ are fixed.
\end{prop}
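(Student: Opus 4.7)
The plan is to use Proposition \ref{prop3.1} together with the coprime case from Section 3 to identify the birational type of $G_0^1$, and hence of $G_0(n,d,1)$, with a single variety depending only on $d$ and $h := \Gcd(n,d)$. Write $m := n/h$; as $n$ varies subject to $h$ and $d$ being fixed, so does $m$, but one always has $\Gcd(m,d/h)=1$. The Poincar\'e bundle $\cU$ used in the construction of $\cF$ is then a Poincar\'e bundle for the coprime type $(m,d/h)$, and by Proposition \ref{prop2.1}(2) applied to $(m,d/h,1)$ the variety $P(p_{2*}\cU)$ is a $\PP^{d/h-1}$-bundle over $C$ which is Zariski locally trivial (the same observation underlying Corollary \ref{cor2.2}).

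Next I would choose a non-empty Zariski open $V \subseteq C$ on which $P(p_{2*}\cU)$ trivializes. By construction the $i$-th factor of $\cF$ is pulled back along the $i$-th projection $\times_{i=1}^h C \to C$, so $\cF$ trivializes over $V^h \setminus \Delta$ with
$$
\cF|_{V^h \setminus \Delta} \;\simeq\; (V^h \setminus \Delta) \times (\PP^{d/h-1})^h.
$$
Because the product structure is symmetric in the $h$ factors, the natural $S(h)$-action introduced in Proposition \ref{prop3.1} becomes, in this trivialization, the simultaneous permutation of the $V$-factors of the base and of the $\PP^{d/h-1}$-factors of the fibre.

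Passing to the quotient by $S(h)$ and using Proposition \ref{prop3.1}, we obtain that $G_0^1 \simeq \cF/S(h)$ is birational to $S^h(C \times \PP^{d/h-1})$. Since $G_0^1$ is a dense Zariski open in the irreducible variety $G_0(n,d,1)$ (see \cite{ln}), the latter is itself birational to $S^h(C \times \PP^{d/h-1})$, a variety manifestly depending only on $d$ and $h$. This proves the proposition.

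The point requiring the most care is the $S(h)$-equivariance of the trivialization of $\cF$: a priori $S(h)$ could act on the generic fibre by non-trivial projective automorphisms, and the conclusion would be harder to draw. The remedy is precisely to choose the trivializing open of the form $V^h$ arising from a single Zariski trivialization of $P(p_{2*}\cU) \to C$, so that the symmetry between the $h$ copies is manifest by construction. Once this is verified, everything else is routine in view of the coprime case (Proposition \ref{prop2.1} and Corollary \ref{cor2.2}).
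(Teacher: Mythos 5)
Your proposal is correct and follows essentially the same route as the paper: trivialize $P(p_{2*}\cU)$ over a dense open $U\subseteq C$, observe that $\cF$ then becomes a product over $U^h\setminus\Delta_U$ on which $S(h)$ acts by simultaneously permuting the factors, and conclude that the quotient depends only on $d$ and $h$. Your explicit identification of the birational model as $S^h(C\times\PP^{d/h-1})$ and your remark on the equivariance of the trivialization are just slightly more detailed versions of what the paper states.
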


\begin{proof}
The sheaf $p_{2*}\cU$ is locally trivial over $C$. This implies that there is an open dense subset 
$U$ of $C$ such that
$$
\cF|_{\times_{i=1}^h U \;\setminus \,\Delta_U} \simeq \times_{i=1}^h(U \times \PP^{\frac{d}{h}-1})|_
{\times_{i=1}^h U  \; \setminus \, \Delta_U},
$$
where $\Delta_U:=\Delta\cap\times^h_{i=1}U$. Moreover
the action of the group $S(h)$ on this is given by permuting the factors. This implies the assertion.
\end{proof}

The following corollary improves \cite[Theorem 5.2]{ht} slightly in the case $k=1$.

\begin{cor}
Suppose $d = \prod_{i=1}^r p_i^{a_i}$ is the prime decomposition of $d$. Then there are at most $\prod_{i=1}^r (a_i + 1)$
birational types of varieties $G(\alpha;n,d,1)$. 
\end{cor}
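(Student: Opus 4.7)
The bound $\prod_{i=1}^{r}(a_i+1)$ is exactly the divisor function $\tau(d)$, so the strategy is to show that the birational type of $G(\alpha;n,d,1)$ is determined by the divisor $h:=\gcd(n,d)$ of $d$. As $n$ varies with $d$ fixed, $h$ ranges over the positive divisors of $d$, giving at most $\tau(d)$ possibilities in total.

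The argument splits into two steps. First, I would reduce to small positive $\alpha$: for fixed $(n,d)$ and $k=1$, every nonempty $G(\alpha;n,d,1)$ is birational to $G_0(n,d,1)$. In the coprime case this is \cite[Theorem 4.4(ii)]{ln}; for arbitrary $(n,d)$ with $k=1$, it follows from the wall-crossing picture for coherent systems, where crossing a critical value of $\alpha$ modifies the moduli space by a Thaddeus-type flip which preserves the birational type. Second, I would apply Proposition \ref{prop3.2}: the birational type of $G_0(n,d,1)$ depends only on the pair $(d,h)$, so as $n$ varies with $d$ fixed there are at most $\tau(d)$ distinct birational classes among the $G_0(n,d,1)$.

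Combining the two steps, every $G(\alpha;n,d,1)$ is birational to one of at most $\tau(d)$ representatives, one for each divisor $h$ of $d$, which gives the stated bound. The main subtle point is the $\alpha$-independence of the birational type in the non-coprime case; once that is in hand, the remainder is simply a divisor count coupled with a direct invocation of Proposition \ref{prop3.2}.
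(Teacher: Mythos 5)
Your proposal is correct and follows essentially the same route as the paper: independence of the birational type from $\alpha$ (which the paper obtains directly from \cite[Theorem 4.4 (ii)]{ln}, without needing your separate wall-crossing caveat for the non-coprime case) combined with Proposition \ref{prop3.2} and a count of divisors of $d$.
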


\begin{proof}
According to \cite[Theorem 4.4 (ii)]{ln} the birational type of the variety $G(\alpha;n,d,1)$ does not depend on $\alpha$.
Hence Proposition \ref{prop3.2} implies that the number of birational types of $G(\alpha;n,d,1)$ equals 
at most the number of divisors of $d$.
\end{proof}

Fix $N$ in $\Pic^d(C)$ and define
$$
G_0^1(N) = G_0^1 \cap G_0(n,N,1).
$$
If $(E,V) \in G_0^1(N)$, then $E = F_1 \oplus \cdots \oplus F_h$ as in \eqref{eqn1} and $\otimes_{i=1}^h \det F_i = N$.

Regarding $\times_{i=1}^h C$ as the set of $h$-tuples of stable bundles $(F_1, \ldots , F_h)$ of rank $\frac{n}{h}$ 
and degree $\frac{d}{h}$, the subset
$$
C_N := \{(F_1, \ldots, F_h) \in \times_{i=1}^h C \;|\; \otimes_{i=1}^h \det F_i = N \}
$$ 
is isomorphic to $\times_{i=1}^{h-1} C$. The variety
$$
\cF_N := \cF|_{C_N \setminus (\Delta \cap C_N)}
$$
is a $(\times_{i=1}^h \PP^{\frac{d}{2}-1})$-fibration over $C_N \setminus(\Delta \cap C_N)$.
With this notation we have

\begin{prop} \label{prop3.4}
There is a natural action of the group $S(h)$ on $\cF_N$ such that the quotient is isomorphic to the open subset $G_0^1(N)$
of the variety $G_0(n,N,1)$,
$$
G_0^1(N) \simeq \cF_N/S(h).
$$
\end{prop}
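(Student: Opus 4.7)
The plan is to deduce this directly from Proposition \ref{prop3.1} by checking that the $S(h)$-action on $\mathcal{F}$ restricts cleanly to $\mathcal{F}_N$ and that the corresponding quotient picks out exactly $G_0^1(N) \subset G_0^1$.

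First, I would observe that the natural $S(h)$-action on $\mathcal{F}$ of Proposition \ref{prop3.1} is induced by permutation of the factors in $\times_{i=1}^h C$, together with the corresponding permutation of the projective fibres of $\times_{i=1}^h P(p_{2*}\cU)$. Since the determinant of $E = F_1 \oplus \cdots \oplus F_h$ equals $\otimes_{i=1}^h \det F_i$ and this tensor product is symmetric in the $F_i$, the subvariety $C_N \subset \times_{i=1}^h C$ is $S(h)$-stable. Hence the action preserves $\mathcal{F}_N = \mathcal{F}|_{C_N \setminus (\Delta \cap C_N)}$ and restricts to a natural $S(h)$-action there.

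Next, I would identify the preimage of $G_0^1(N)$ under the quotient map $\mathcal{F} \to \mathcal{F}/S(h) \simeq G_0^1$. By the description of the isomorphism in Proposition \ref{prop3.1}, a point of $\mathcal{F}$ over $(F_1,\ldots,F_h)$ corresponds to a coherent system $(E,V)$ with $E \simeq F_1\oplus\cdots\oplus F_h$ and $V$ a generic one-dimensional subspace of $H^0(E)$. Such a system lies in $G_0^1(N)$ precisely when $\otimes_{i=1}^h \det F_i \simeq N$, i.e.\ when $(F_1,\ldots,F_h)\in C_N$. Thus the preimage of $G_0^1(N)$ is exactly $\mathcal{F}_N$, and the quotient map restricts to a bijective morphism $\mathcal{F}_N/S(h)\to G_0^1(N)$.

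Finally, to upgrade this bijection to an isomorphism, I would invoke the fact that both $\mathcal{F}/S(h)\to G_0^1$ and its restriction are quotient maps of a free action on an open locus (the freeness coming from the non-isomorphism of the $F_i$ once one is outside $\Delta$), so the restriction of a geometric quotient to a saturated open subscheme is again a geometric quotient. The main thing to be careful about is this saturation property—i.e.\ that $\mathcal{F}_N$ is $S(h)$-stable and that its image in the quotient is open in $G_0^1$ with complement exactly the image of $\mathcal{F}\setminus\mathcal{F}_N$—but both points are immediate from the symmetry of the determinant and from the fact that the fibres of $G_0^1 \to \Pic^d(C)$, $(E,V) \mapsto \det E$, are parametrised by $C_N \setminus (\Delta\cap C_N)$ at the level of the base. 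No computation is really needed beyond Proposition \ref{prop3.1}; the content is just that the construction is compatible with fixing the determinant.
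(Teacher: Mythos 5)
Your argument is correct and is essentially the paper's own proof: the authors likewise observe that the $S(h)$-actions on $\times_{i=1}^h C$ and $\cF$ restrict to $C_N$ and $\cF_N$ (since $\otimes_{i=1}^h\det F_i$ is symmetric in the $F_i$) and that $\Delta\cap C_N$ is exactly the locus where some $F_i\simeq F_j$, then conclude directly from Proposition \ref{prop3.1}. Your additional remarks about saturation and freeness of the action off $\Delta$ are correct but are left implicit in the paper.
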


\begin{proof}
The natural actions of $S(h)$ on the varieties $\times_{i=1}^h C$ and $\cF$ restrict to actions on $C_N$ and $\cF_N$ respectively.
Moreover $\Delta \cap C_N = \{ (F_1,\ldots,F_h) \in C_N \;|\; F_i \simeq F_j \;\mbox{for some}\; i \neq j \}$. The result now follows.
\end{proof}

\section{$G_0(n,d,1)$ with $\Gcd(n,d)=2$}

\subsection{The set up}
Now suppose $\Gcd(n,d) =2$ and consider the moduli space $G_0(n,d,1)$ as well as
its subspace $G_0(n,N,1)$.   
Then there are no stable vector bundles of rank $n$ and degree $d$. 
Hence, if $(E,V) \in G_0(n,d,1)$, then $E$ is of one of the following types 
\begin{enumerate}
\item $E = F_1 \oplus F_2$ with $F_1,F_2$ stable of rank $\frac{n}{2}$, degree $\frac{d}{2}$ and $F_1 \not \simeq F_2$;
\item there is a nontrivial exact sequence $0 \ra F \ra E \ra F \ra 0$ with stable $F$;
\item $E = F \oplus F$ with stable $F$. 
\end{enumerate}
The coherent systems of type (1) form the open set $G_0^1$ (respectively $G_0^1(N)$) of the previous section 
in the moduli space $G_0(n,d,1)$ (respectively $G_0(n,N,1)$). 
The coherent systems of type (2) form a locally closed subset 
$G_0^2$ (respectively $G_0^2(N)$) in $G_0(n,d,1)$ (respectively $G_0(n,N,1)$), whose boundary is the closed subset
$G_0^3$ (respectively $G_0^3(N)$) of coherent systems of type (3) in $G_0(n,d,1)$ (respectively $G_0(n,N,1)$).
Moreover we have stratifications
\begin{equation} \label{eq1}
G_0(n,d,1) = \sqcup_{i=1}^3 G_0^i \quad \mbox{and} \quad G_0(n,N,1) = \sqcup_{i=1}^3 G_0^i(N).
\end{equation}

\subsection{The spaces $G_0^1$ and $G_0^1(N)$}

According to Proposition \ref{prop3.1} there is a natural action of the group $\ZZ_2$ on the variety $\cF$ such that 
$$
G_0^1 \simeq \cF/\ZZ_2,
$$
where $\cF = P \times P \setminus (p \times p)^{-1}(\Delta)$; here $p: P \ra C$ is the projection of a 
$\PP^{\frac{d}{2}-1}$-bundle over the curve $C$ and $\Delta \subset C \times C$ the diagonal. This allows 
us to compute the Hodge polynomial of $G_0^1$.

\begin{prop} \label{prop4.1} \quad \newline
$\e_{G_0^1}(u,v) = \frac{(1+u)(1+v)(1-(uv)^{\frac{d}{2}})}{(1-uv)^2(1+uv)}\{(u+v)(uv-(uv)^{\frac{d}{2}}) +uv(1 -(uv)^{\frac{d}{2}+1}) \}.$
\end{prop}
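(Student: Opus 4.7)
I will use the isomorphism $G_0^1\simeq\cF/\ZZ_2$ from Proposition~\ref{prop3.1} and extend the swap action to the whole of $P\times P$. The fixed locus of this swap is the diagonal of $P\times P$, which lies in the closed subvariety $P\times_C P$. Since $P\times_C P$ has been removed to form $\cF$, the $\ZZ_2$-action on $\cF$ is free, and upon passing to the ambient quotient $S^2P:=(P\times P)/\ZZ_2$ one obtains the stratification
$$
S^2P \;=\; G_0^1 \;\sqcup\; S^2_C P, \qquad S^2_C P := (P\times_C P)/\ZZ_2.
$$
Additivity of the Hodge polynomial then reduces the problem to
$$
\e_{G_0^1} \;=\; \e_{S^2P} \;-\; \e_{S^2_C P}.
$$

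\textbf{The two symmetric squares.} Since $p:P\to C$ is a Zariski locally trivial $\PP^{d/2-1}$-bundle, so is $P\times_C P\to C$ with fibre $\PP^{d/2-1}\times \PP^{d/2-1}$; the $\ZZ_2$-action is fibrewise, so $S^2_C P\to C$ is a Zariski locally trivial $S^2\PP^{d/2-1}$-bundle, and the multiplicativity recalled in section~2 gives $\e_{S^2_C P}=\e_C\cdot \e_{S^2\PP^{d/2-1}}$. Both $\e_{S^2\PP^{d/2-1}}$ and $\e_{S^2P}$ are then computed from the Macdonald-type identity
$$
\e_{S^2 Y}(u,v) \;=\; \tfrac12\bigl(\e_Y(u,v)^2 + \e_Y(-u^2,-v^2)\bigr),
$$
valid for any smooth quasi-projective $Y$, applied respectively to $Y=\PP^{d/2-1}$ and $Y=P$ (with $\e_P=(1+u)(1+v)(1-(uv)^{d/2})/(1-uv)$). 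For $Y=\PP^n$ the identity collapses, after cancelling $(1-a^{n+1})$, to the clean formula $\e_{S^2\PP^{n}}=(1-(uv)^{n+1})(1-(uv)^{n+2})/[(1-uv)^2(1+uv)]$.

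\textbf{Main obstacle and finish.} The crucial input is the Macdonald-type identity above: the naive guess $\e_{G_0^1}=\tfrac12\e_\cF$ \emph{fails}, because swap acquires Koszul signs on odd-degree Hodge classes (which are present on $P\times P$ since $h^{1,0}(C)=h^{0,1}(C)=1$), and these signs force the substitution $(u,v)\mapsto(-u^2,-v^2)$ in the correction term. Granted this identity, the remainder is a bookkeeping calculation: setting $a=uv$ and $f=(1-a^{d/2})/(1-a)$, one pulls out the common factor $(1+u)(1+v)f$ from both $\e_{S^2P}$ and $\e_{S^2_C P}$, clears the denominator $(1-a)(1+a)$, and checks that the surviving terms telescope to $(u+v)(uv-a^{d/2}) + uv(1-a^{d/2+1})$, producing the stated formula.
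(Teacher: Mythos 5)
Your proposal is correct and is essentially the paper's own argument: both rest on the decomposition $P\times P=\cF\sqcup(p\times p)^{-1}(\Delta)=\cF\sqcup P\times_C P$ together with the Macdonald-type identity $\e_{S^2Y}=\tfrac12(\e_Y(u,v)^2+\e_Y(-u^2,-v^2))$ applied to $P$ and to the fibres $\PP^{d/2-1}$, and I have checked that the resulting difference does simplify to the stated expression. The only (purely presentational) difference is that you work downstairs with the quotient stratification $S^2P=G_0^1\sqcup S^2_CP$ and additivity, whereas the paper works upstairs with the $(+1)$-eigenspaces of compactly supported cohomology and subtracts.
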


\begin{proof} 
The above description of $\cF$ gives the following exact sequence of the $(+1)$-eigenspaces of the action of $\ZZ_2$ on $\cF$
on the $i$-th cohomology with compact support
$$
0 \ra H^i_c(\cF)_+ \ra H^i(P \times P)_+ \ra H^i((p \times p)^{-1}(\Delta))_+ \ra 0.
$$
This implies 
$$
\e_{G_0^1}(u,v) = \e_{\cF}(u,v)_+ = \e_{P \times P}(u,v)_+ - \e_{(p \times p)^{-1}(\Delta)}(u,v)_+.
$$
Since $\e_{P}(u,v) = \frac{(1+u)(1+v)(1-(uv)^{\frac{d}{2}})}{1-uv}$, \cite[Lemma 2.6]{mov} implies\\
$$
\begin{array}{rll}
\e_{P \times P}(u,v)_+ &= & \frac{1}{2} [\e_{P}(u,v)^2 + \e_{P}(-u^2,-v^2)]\\
& = & \frac{1}{2}\{\frac{(1+u)^2(1+v)^2(1-(uv)^{\frac{d}{2}})^2}{(1-uv)^2} 
+ \frac{(1-u^2)(1-v^2)(1-(uv)^d)}{1-(uv)^2}\}.
\end{array}
$$
Now $(p \times p)^{-1}(\Delta)$ is a $(\PP^{\frac{d}{2}-1} \times \PP^{\frac{d}{2}-1})$-fibration over $\Delta$ and $\ZZ_2$ acts
on it by swapping the two $\PP^{\frac{d}{2}-1}$'s. So, again using \cite[Lemma 2.6]{mov},
$$
\begin{array}{rcl}
\e_{(p \times p)^{-1} (\Delta)}(u,v)_+ & = &\e_{\Delta}(u,v) \e_{\PP^{\frac{d}{2}-1} \times \PP^{\frac{d}{2}-1}}(u,v)_+\\
& = &\frac{1}{2}(1+u)(1+v)\{ \frac{(1-(uv)^{\frac{d}{2}})^2}{(1-uv)^2} + \frac{1-(uv)^d}{1-(uv)^2} \}.
\end{array}
$$
Subtracting and simplifying we get the assertion.
\end{proof}

Now fix $N$ in $\Pic^d(C)$. Then $C_N \subset C \times C$ and $\Delta\cap C_N$ consists of 4 points (given by the 
square roots of $N$). Moreover $ \Delta\cap C_N$ is the fixed point set for the action of $S(2) = \ZZ_2$ on $C_N$ and 
$C_N/\ZZ_2$ is isomorphic to $\PP^1$. Here Proposition \ref{prop3.4} gives

\begin{prop} \label{prop4.2}
There is a natural action of the group $\ZZ_2$ on $\cF_N$ such that the quotient is isomorphic to the open subset $G_0^1(N)$
of the variety $G_0(n,N,1)$,
$$
G_0^1(N) \simeq \cF_N/\ZZ_2.
$$
\begin{flushright} $\square$ \end{flushright}
\end{prop}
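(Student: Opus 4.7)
The plan is to recognize that Proposition~\ref{prop4.2} is precisely the specialization of Proposition~\ref{prop3.4} to the case $h=\Gcd(n,d)=2$, at which point $S(h)=S(2)=\ZZ_2$. Thus the bulk of the statement has already been established in Section~4, and the remaining work is to trace through the more explicit descriptions introduced in Section~5.1--5.2 and verify that the $\ZZ_2$-action survives the restriction to the fixed-determinant locus $C_N$.

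First I would recall that for $(E,V)\in G_0^1(N)$, the bundle $E$ splits as $E=F_1\oplus F_2$ with $F_1,F_2$ non-isomorphic stable of rank $\frac{n}{2}$ and degree $\frac{d}{2}$ satisfying $\det F_1\otimes\det F_2=N$, and the one-dimensional subspace $V\subset H^0(F_1)\oplus H^0(F_2)$ is determined by a pair of lines $(v_1,v_2)\in \PP(H^0(F_1))\times \PP(H^0(F_2))$, since the projection of $V$ to each summand must be non-zero. Under the Poincar\'e bundle description recalled in Section~4, an ordered such datum is exactly a point of the subvariety $\cF_N=\cF|_{C_N\setminus(\Delta\cap C_N)}$ of $\cF=P\times P\setminus(p\times p)^{-1}(\Delta)$, where $P=\PP(p_{2*}\cU)$.

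Next I would observe that the $\ZZ_2$-action on $C\times C$ swapping the two factors preserves $C_N$ (since $\det F_1\otimes\det F_2$ is symmetric in the $F_i$) and preserves the diagonal, so it restricts to an action on $C_N\setminus(\Delta\cap C_N)$. This lifts canonically to an action on $\cF_N$ swapping the two projective factors. Two ordered pairs $((F_1,v_1),(F_2,v_2))$ and $((F_2,v_2),(F_1,v_1))$ yield the same unordered direct sum $(F_1\oplus F_2,\langle v_1,v_2\rangle)$ up to the obvious isomorphism, so the map $\cF_N\to G_0^1(N)$ factors through the quotient $\cF_N/\ZZ_2$.

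The only real obstacle is to check that the induced morphism $\cF_N/\ZZ_2\to G_0^1(N)$ is an isomorphism of varieties rather than merely a bijection on points. Injectivity holds because the only identifications come from the swap, and points of $\cF_N$ have $F_1\not\simeq F_2$ by construction; surjectivity is the generic-splitting description of semistable bundles of type $(n,d)$ with $h=2$; and the bijection promotes to an algebraic isomorphism by the universal property of the Poincar\'e bundle, exactly as exploited in the proofs of Propositions~\ref{prop2.1} and \ref{prop3.4}. With this verification the proposition follows.
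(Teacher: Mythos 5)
Your proposal matches the paper exactly: the paper derives Proposition \ref{prop4.2} as the immediate specialization of Proposition \ref{prop3.4} to $h=2$ (where $S(2)=\ZZ_2$), which is precisely your opening observation, and the additional verifications you supply are the content already established in Propositions \ref{prop3.1} and \ref{prop3.4}. Correct, same route.
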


Proposition \ref{prop4.2} allows us to compute the Hodge polynomial of $G_0^1(N)$.

\begin{prop}  \label{prop4.3}
$$
\e_{G_0^1(N)}(u,v) = \frac{1-(uv)^{\frac{d}{2}}}{(1-uv)^2(1+uv)}[(1-(uv)^{\frac{d}{2} + 1})(uv - 3) + (uv - (uv)^{\frac{d}{2}})(u+v)].
$$
\end{prop}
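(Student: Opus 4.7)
The plan is to adapt the strategy of Proposition~\ref{prop4.1} to the fixed--determinant setting. By Proposition~\ref{prop4.2}, $G_0^1(N)\simeq\cF_N/\ZZ_2$, and since the quotient map is finite one has $\e_{G_0^1(N)}(u,v)=\e_{\cF_N}(u,v)_+$. I would enlarge $\cF_N$ to the Zariski locally trivial $(\PP^{\frac{d}{2}-1}\times\PP^{\frac{d}{2}-1})$-bundle
\[
\widetilde{\cF_N}:=(p\times p)^{-1}(C_N)\subset P\times P
\]
over $C_N\simeq C$, so that $\cF_N=\widetilde{\cF_N}\setminus\widetilde{\cF_N}|_{\Delta\cap C_N}$. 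Since this stratification is $\ZZ_2$--invariant, additivity of the $(+1)$--eigenspaces gives
\[
\e_{G_0^1(N)}(u,v)=\e_{\widetilde{\cF_N}}(u,v)_+-\e_{\widetilde{\cF_N}|_{\Delta\cap C_N}}(u,v)_+,
\]
in direct parallel with the short exact sequence used in Proposition~\ref{prop4.1}.

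The second term is simple: $\Delta\cap C_N$ consists of the four square roots of $N$, each fixed by the involution, so $\widetilde{\cF_N}|_{\Delta\cap C_N}$ is a disjoint union of four copies of $\PP^{\frac{d}{2}-1}\times\PP^{\frac{d}{2}-1}$ on which $\ZZ_2$ acts merely by swapping the two factors. Hence
\[
\e_{\widetilde{\cF_N}|_{\Delta\cap C_N}}(u,v)_+=4\,\e_{\PP^{\frac{d}{2}-1}\times\PP^{\frac{d}{2}-1}}(u,v)_+,
\]
which is evaluated exactly as in Proposition~\ref{prop4.1} via \cite[Lemma 2.6]{mov}.

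For the first term I would apply Leray--Hirsch. The two relative hyperplane classes $h_1,h_2$ pulled back from the two factors of $P\times P$ are algebraic of Hodge type $(1,1)$ and restrict to a basis of the cohomology of each fibre $\PP^{\frac{d}{2}-1}\times\PP^{\frac{d}{2}-1}$ of $\widetilde{\cF_N}\to C_N$, giving a $\ZZ_2$--equivariant decomposition of mixed Hodge structures
\[
H^*(\widetilde{\cF_N},\CC)\;\cong\;H^*(C_N,\CC)\otimes H^*(\PP^{\frac{d}{2}-1}\times\PP^{\frac{d}{2}-1},\CC),
\]
with $\ZZ_2$ acting diagonally by $\sigma^*$ on the first tensor factor and by the involution $h_1\leftrightarrow h_2$ on the second. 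Taking invariants yields
\[
\e_{\widetilde{\cF_N}}(u,v)_+=\e_{C_N}(u,v)_+\,\e_{\PP\times\PP}(u,v)_+ + \e_{C_N}(u,v)_-\,\e_{\PP\times\PP}(u,v)_-.
\]
The involution $\sigma:L\mapsto N\otimes L^{-1}$ on $C_N\simeq\Pic^{d/2}(C)$ is, after a translation, inversion on the elliptic curve, and so acts as $+1$ on $H^0,H^2$ and as $-1$ on $H^1$; therefore $\e_{C_N}(u,v)_+=1+uv$ and $\e_{C_N}(u,v)_-=u+v$.

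Substituting these values together with the formulas for $\e_{\PP\times\PP}(u,v)_\pm$ obtained in Proposition~\ref{prop4.1} reduces the claim to a rational function identity. The main obstacle is this algebraic simplification rather than any conceptual point: after factoring out $\frac{1-(uv)^{d/2}}{(1-uv)^2(1+uv)}$ and using the identity $\frac{1-(uv)^d}{1-(uv)^2}=\frac{(1-(uv)^{d/2})(1+(uv)^{d/2})}{(1-uv)(1+uv)}$, one checks that the remaining numerator collapses to $(1-(uv)^{d/2+1})(uv-3)+(uv-(uv)^{d/2})(u+v)$, which yields the stated formula.
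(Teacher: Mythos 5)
Your proposal is correct and follows essentially the same route as the paper: the paper computes $\e_{\cF_N}(u,v)_+$ directly as the pairing of $\e_{C_N\setminus(\Delta\cap C_N)}(u,v)_{\pm}$ with $\e_{\PP^{\frac{d}{2}-1}\times\PP^{\frac{d}{2}-1}}(u,v)_{\pm}$, arriving at exactly the factors $uv-3$ and $u+v$ that you obtain by first working over all of $C_N$ and then subtracting the four swap-invariant fibres over $\Delta\cap C_N$. Your Leray--Hirsch justification of the equivariant product formula and your identification of the involution on $C_N$ as (a translate of) inversion are consistent with, and slightly more explicit than, what the paper records.
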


\begin{proof} From Proposition \ref{prop4.2} we conclude
\begin{eqnarray*}
\e_{G_0^1(N)}(u,v) & = & \e_{\cF_N}(u,v)_+\\
&=& \e_{C_N \setminus(\Delta \cap C_N)}(u,v)_+  \cdot \e_{\PP^{\frac{d}{2}-1} \times \PP^{\frac{d}{2}-1}}(u,v)_+ \\
&& \hspace*{1cm} + \e_{C_N \setminus(\Delta \cap C_N)}(u,v)_- \cdot \e_{\PP^{\frac{d}{2}-1} \times \PP^{\frac{d}{2}-1}}(u,v)_-
\end{eqnarray*}
Now $\e_{C_N \setminus(\Delta \cap C_N)}(u,v) = (1+u)(1+v) - 4$ and 
$$
\e_{C_N \setminus(\Delta \cap C_N)}(u,v)_+ = \e_{\PP^1 \setminus \{4 \; {\rm points}\}}(u,v) = uv - 3;
$$ 
hence
$$
\e_{C_N \setminus(\Delta \cap C_N)}(u,v)_- = u + v.
$$
Using \cite[Lemma 2.6]{mov} we get
\begin{eqnarray*}
\e_{\PP^{\frac{d}{2}-1} \times \PP^{\frac{d}{2}-1}}(u,v)_+ &=& \frac{1}{2}\{ \e_{\PP^{\frac{d}{2}-1}}(u,v)^2 + \e_{\PP^{\frac{d}{2}-1}}(-u^2,-v^2) \}\\
&=& \frac{1}{2} \left\{ \frac{(1 - (uv)^{\frac{d}{2}})^2}{(1-uv)^2} + \frac{1-(uv)^d}{1-(uv)^2} \right\}\\
&=& \frac{(1-(uv)^{\frac{d}{2}})(1 - (uv)^{\frac{d}{2} +1})}{(1-uv)^2(1+uv)}
\end{eqnarray*}
and similarly
$$
\e_{\PP^{\frac{d}{2}-1}\times \PP^{\frac{d}{2}-1}}(u,v)_- = \frac{(1-(uv)^{\frac{d}{2}})(uv - (uv)^{\frac{d}{2}})}{(1-uv)^2(1+uv)}.
$$
Inserting these expressions into the above formula for $\e_{G_0^1(N)}(u,v)$ and simplifying we get the assertion.
\end{proof}

\subsection{The spaces $G_0^2$ and $G_0^2(N)$} Recall that $(E,V) \in G_0(n,d,1)$ is of type (2) if it 
admits a non-trivial exact sequence $0 \ra F \ra E \ra F \ra 0$ with a stable vector bundle $F$. The bundle $F$ is
uniquely determined by $E$ and called the stable bundle associated to $E$. 
Conversely $E$ is uniquely determined by $F$, since $\dim \Ext^1(F,F) = h^1(End(F)) = h^0(End(F)) = 1$. 

\begin{lem}\label{new}
Fix a stable bundle $F$ of rank $\frac{n}{2}$ and degree $\frac{d}{2}$.
The variety of coherent systems $(E,V)$ of type (2) with associated stable bundle $F$ 
is isomorphic to an $\mathbb A^{\frac{d}{2}-1}$-fibration over the projective space $\PP^{\frac{d}{2}-1}$. 
\end{lem}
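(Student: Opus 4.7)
The plan is as follows. First, since $\Ext^1(F,F)\cong\CC$ by the observation preceding the lemma, I would fix a representative of the unique non-split extension $0\to F\to E\to F\to 0$; the variety in question then consists of 1-dimensional subspaces $V\subset H^0(E)$ giving a $0^+$-stable coherent system, taken modulo $\mbox{Aut}(E)$. Atiyah's theorem applied to the stable bundle $F$ of positive slope yields $h^0(F)=d/2$ and $h^1(F)=0$, so the long exact sequence of the extension forces $h^0(E)=d$. Thus $V$ lives in $\PP(H^0(E))\cong\PP^{d-1}$ and $\PP(H^0(F))\subset\PP(H^0(E))$ is a linearly embedded $\PP^{d/2-1}$.

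Next I would pin down which $V$ give $0^+$-stability. Since $E$ is semistable of slope $d/n$, a short argument using stability of $F$ and the non-splitting of the extension shows that the only proper non-zero subbundle of $E$ of slope $d/n$ is $F$ itself. Hence the only potentially destabilizing coherent subsystem is $(F,V\cap H^0(F))$; a direct comparison of $\alpha$-slopes for small $\alpha>0$ shows $(E,V)$ is $0^+$-stable if and only if $V\cap H^0(F)=0$, i.e.\ $V$ lies in the open set $U:=\PP(H^0(E))\setminus\PP(H^0(F))$.

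For the automorphisms, I would compute $\End(E)$ using the long exact sequences obtained by applying $\Hom(F,\cdot)$ and $\Hom(\cdot,F)$ to the extension. Because the extension is non-split, the connecting homomorphism $\Hom(F,F)\to\Ext^1(F,F)$ is non-zero, giving $\Hom(F,E)=\Hom(E,F)=\CC$ and hence $\End(E)=\CC\cdot\mbox{id}\oplus\CC\cdot n$, where $n: E\twoheadrightarrow F\hookrightarrow E$ is nilpotent of square zero. Consequently $\mbox{Aut}(E)=\CC^*\ltimes\mathbb{G}_a$, and since the scalars act trivially on $\PP(H^0(E))$, the effective action on $U$ is that of $\mathbb{G}_a$ via $V\mapsto(\mbox{id}+bn)(V)$.

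Finally, to obtain the fibration I would use the linear projection $\pi:U\to\PP(H^0(E)/H^0(F))\cong\PP^{d/2-1}$, which is a Zariski-locally trivial $\mathbb{A}^{d/2}$-bundle and is $\mathbb{G}_a$-equivariant for the trivial action on the base (since $n(H^0(E))\subset H^0(F)$). After choosing a lift $\tilde v$ of a generator of a base point $\bar V$, the fiber identifies with $H^0(F)$ via $f\mapsto[\tilde v+f]$, and the $\mathbb{G}_a$-action becomes the translation $f\mapsto f+b\cdot n(\tilde v)$. The condition $V\cap H^0(F)=0$ forces $n(\tilde v)\ne 0$, so the action is free and the fiberwise quotient is an affine space of dimension $d/2-1$; hence $U/\mathbb{G}_a$ is the asserted $\mathbb{A}^{d/2-1}$-fibration over $\PP^{d/2-1}$. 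The main obstacle, I expect, will be the explicit identification of $\End(E)$ together with the verification that the induced $\mathbb{G}_a$-action really is fiberwise translation by a non-vanishing direction; once this is in hand, local triviality of the quotient follows from that of $\pi$.
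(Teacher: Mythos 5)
Your proposal is correct and follows essentially the same route as the paper: both identify the admissible $V$ with $P(H^0(E))\setminus P(H^0(F))$ fibred over $\PP^{\frac{d}{2}-1}$ (the projected line $\ell=H^0(p)(V)$) with fibre $\mathbb A^{\frac{d}{2}}$, and then quotient each fibre by the unipotent part of $\mbox{Aut}(E)$ to get $\mathbb A^{\frac{d}{2}-1}$. The only cosmetic difference is that you realise this quotient as a translation by the non-zero vector $n(\tilde v)$, while the paper collapses the lines through the point $\ell$ and identifies the result with a hyperplane section $H\setminus(H\cap P(H^0(F)))$.
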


\begin{proof}
A coherent system $(E,V)$ with $\det E = (\det F)^2$ is of type (2) if and only if 
there is a non-trivial exact sequence $0 \ra F \ra E \stackrel{p}{\ra} F \ra 0$ such that $V \subset H^0(E)$ is a line which 
projects to a line $\ell$ in $H^0(F)$. (Note that, if $H^0(p)(V)=0$, the subsystem $(F,V)$ would contradict the $0^+$-stability of $(E,V)$.) Via the exact sequence the line $\ell$ is contained in 
$H^0(E): \; \ell \subset H^0(F) \subset H^0(E)$. So we end up with a plane in $H^0(E)$ containing $\ell$ and projecting to 
$\ell$. 

Considering lines as points in the corresponding projective spaces, we have 
$\ell \in P(H^0(F))$ and $V \in  P(H^0(p)^{-1}(\ell)) \subset P(H^0(E))$.
Hence the coherent systems $(E,V)$ of type (2) associated to the bundle $F$ with a fixed 
$\ell \in P(H^0(F))$ correspond to the points of $P(H^0(p)^{-1}(\ell)) \setminus P(H^0(F))$. 
Since $\mbox{Aut} E = \{ \lambda \cdot id_E \;|\; \lambda \in \CC^* \} \times  \End F$,
two coherent systems of this form are isomorphic if and only if the corresponding points of 
$P(H^0(p)^{-1}(\ell)) \setminus P(H^0(F))$ lie in the same line through $\ell$. 

Now choose a hyperplane $H$ in $P(H^0(p)^{-1}(\ell))$
not containing the point $\ell$. Then we can identify the isomorphism classes of pairs $(E,V)$ with fixed $F$, $E$ 
and $H^0(p)(V) = \ell$ with the points of $H \setminus (H \cap P(H^0(F)))$. Since $P(H^0(p)^{-1}(\ell))$ is of dimension $\frac{d}{2}$, 
this is an affine space $\mathbb A^{\frac{d}{2}-1}$ of dimension $\frac{d}{2} -1$. Since $\ell$ varies in 
the projective space $P(H^0(F))$, this completes the proof of the lemma.
\end{proof}

\begin{prop} \label{prop4.5}
\emph{(a)} The variety $G_0^2$ is isomorphic to an $\mathbb A^{\frac{d}{2}-1}$-fibration over a $\PP^{\frac{d}{2}-1}$-bundle 
over the curve $C$.\\
\emph{(b)} \quad $$\e_{G_0^2}(u,v) = (1+u)(1+v)(uv)^{\frac{d}{2}-1}\frac{1-(uv)^{\frac{d}{2}}}{1-uv}.$$
\end{prop}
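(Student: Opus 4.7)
The plan is to globalize Lemma \ref{new} over the moduli of stable bundles of rank $\frac{n}{2}$ and degree $\frac{d}{2}$, which is identified with $C$. Since $\Gcd(\frac{n}{2},\frac{d}{2})=1$, there is a Poincar\'e bundle $\cU$ on $C\times C$, and $\mathcal{V} := p_{2*}\cU$ is a rank-$\frac{d}{2}$ vector bundle on $C$ with fiber $H^0(F_c)$ at $c$. The sheaf $\mathcal{E}xt^1_{p_2}(\cU,\cU)$ is a line bundle $L$ on $C$ (each fiber being $\Ext^1(F_c,F_c)\cong\CC$), so the identity in $\Hom(L,L)$ determines a universal non-trivial extension $0 \to \cU\otimes p_2^*L^{-1} \to \mathcal{E} \to \cU \to 0$ on $C\times C$. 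Pushing down gives an exact sequence
$$0 \to \mathcal{V}\otimes L^{-1} \to \mathcal{W} \to \mathcal{V} \to 0$$
of bundles on $C$, with $\mathcal{W} := p_{2*}\mathcal{E}$ of rank $d$.

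Let $\pi: Y := P(\mathcal{V}) \to C$ be the tautological $\PP^{\frac{d}{2}-1}$-bundle and $\cO_Y(-1)\hookrightarrow\pi^*\mathcal{V}$ the tautological sub-bundle. Define $\mathcal{K}\subset\pi^*\mathcal{W}$ as the preimage of $\cO_Y(-1)$ under $\pi^*\mathcal{W}\twoheadrightarrow\pi^*\mathcal{V}$: then $\mathcal{K}$ has rank $\frac{d}{2}+1$, fits into $0 \to \pi^*(\mathcal{V}\otimes L^{-1}) \to \mathcal{K} \to \cO_Y(-1) \to 0$, and at $y=(c,[\ell])$ has fiber $H^0(p_c)^{-1}(\ell)\subset H^0(E_c)$ as in Lemma \ref{new}. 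Set $\mathcal{L} := \cO_Y(-1)\otimes\pi^*L^{-1}\subset\mathcal{K}$; fiberwise this is the line $\ell$ viewed in $H^0(E_c)$ via $H^0(F_c)\hookrightarrow H^0(E_c)$. By the lemma, the fiber of $G_0^2\to Y$ over $(c,[\ell])$ is $P(H^0(p_c)^{-1}(\ell))\setminus P(H^0(F_c))$ modulo the residual $\CC$-action of $\mbox{Aut}(E_c)$, which translates $[V]$ along the line through $[\ell]$. Globally this quotient identifies
$$G_0^2 \;\cong\; P(\mathcal{K}/\mathcal{L}) \;\setminus\; P\bigl((\pi^*\mathcal{V}/\cO_Y(-1))\otimes\pi^*L^{-1}\bigr),$$
the complement of a hyperplane subbundle in a $\PP^{\frac{d}{2}-1}$-bundle over $Y$. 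Both bundles being Zariski-locally trivial, $G_0^2 \to Y$ is a Zariski-locally trivial $\mathbb A^{\frac{d}{2}-1}$-bundle, and $Y \to C$ is a Zariski-locally trivial $\PP^{\frac{d}{2}-1}$-bundle, proving (a).

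For (b), applying the multiplicativity of the Hodge polynomial for Zariski-locally trivial fibrations (second bullet of Section 2) to the iterated fibration $G_0^2 \to Y \to C$ yields
$$\e_{G_0^2}(u,v) \;=\; (uv)^{\frac{d}{2}-1}\cdot\frac{1-(uv)^{\frac{d}{2}}}{1-uv}\cdot (1+u)(1+v),$$
which is the claimed formula. The main delicate step is the construction of the universal extension $\mathcal{E}$, which requires verifying via Leray that $\Ext^1_{C\times C}(\cU,\cU\otimes p_2^*L^{-1})$ contains a canonical class restricting non-trivially on each fiber; once this is in place, everything else is a direct globalization of Lemma \ref{new}.
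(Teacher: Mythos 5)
Your argument is correct and follows essentially the same route as the paper: both fibre $G_0^2$ over $C$ via the associated stable bundle $F$ and then over the $\PP^{\frac{d}{2}-1}$-bundle $P(p_{2*}\cU)$ via $\ell=H^0(p)(V)$, with the $\mathbb A^{\frac{d}{2}-1}$ fibres supplied by Lemma \ref{new}. The only real difference is one of packaging: the paper gets multiplicativity of the Hodge polynomial by appealing to local triviality of the affine fibration in the complex topology, whereas your universal-extension construction exhibits both fibrations as Zariski-locally trivial, a slightly more explicit path to the same formula.
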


\begin{proof} (a): Identifying $C$ with the moduli space of stable bundles of rank $\frac{n}{2}$ and degree $\frac{d}{2}$,
the natural map $G_0^2 \ra C$ is a morphism whose fibres are isomorphic to $\mathbb A^{\frac{d}{2}-1}$-fibrations 
over $\PP^{\frac{d}{2}-1}$ by Lemma \ref{new}.\\
(b): Since any $\mathbb A^{\frac{d}{2} - 1}$-fibration over a projective variety is locally trivial in the complex topology, 
we conclude from the third property of Hodge polynomials in section 2 that
$$
\e_{G_0^2}(u,v) = \e_C(u,v) \cdot \e_{\mathbb A^{\frac{d}{2}-1}}(u,v) \cdot \e_{\PP^{\frac{d}{2} - 1}}(u,v), 
$$
which gives the assertion, since $\e_{\mathbb A^{\frac{d}{2}-1}}(u,v) = (uv)^{\frac{d}{2}-1}$.
\end{proof}

\begin{prop} \label{prop4.6}
\emph{(a)} The variety $G_0^2(N)$ is isomorphic to the disjoint union of four  
$\mathbb A^{\frac{d}{2} - 1}$-fibrations over $\PP^{\frac{d}{2}-1}$.\\ 
\emph{(b)} 
$$
\e_{G_0^2(N)}(u,v) = 4 \frac{(uv)^{\frac{d}{2}-1}(1-(uv)^{\frac{d}{2}})}{1-uv}.
$$
\end{prop}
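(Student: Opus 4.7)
The plan is to reduce Proposition \ref{prop4.6} to Lemma \ref{new} by decomposing $G_0^2(N)$ into finitely many fibres of the map $G_0^2\to C$ described in Proposition \ref{prop4.5}. Fix $(E,V)\in G_0^2(N)$ and write $0\to F\to E\to F\to 0$ for its defining extension. Taking determinants gives $\det E=(\det F)^{2}$, so the condition $\det E=N$ translates into $(\det F)^{2}=N$ in $\Pic^{d/2}(C)$. Since $C$ is an elliptic curve, the 2-torsion subgroup of $\Pic^{0}(C)$ has exactly four elements, hence $N$ admits precisely four square roots $M_{1},M_{2},M_{3},M_{4}\in\Pic^{d/2}(C)$.

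The next step is to observe that $\Gcd(n,d)=2$ forces $\Gcd(n/2,d/2)=1$. Indeed, writing $n=2n'$ and $d=2d'$, one has $2=\Gcd(n,d)=2\Gcd(n',d')$. By \cite[Theorem 7]{at}, for each line bundle $M_{j}$ there is therefore a unique stable bundle $F_{j}$ of rank $n/2$ and degree $d/2$ with $\det F_{j}=M_{j}$. Thus the fibre of the morphism $G_{0}^{2}\to C$ of Proposition \ref{prop4.5}(a) above the point corresponding to a stable bundle $F$ lies in $G_{0}^{2}(N)$ if and only if $F\in\{F_{1},F_{2},F_{3},F_{4}\}$, and these four fibres are pairwise disjoint. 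Applying Lemma \ref{new} to each $F_{j}$ yields the description of $G_{0}^{2}(N)$ in part (a) as a disjoint union of four $\mathbb A^{d/2-1}$-fibrations over $\PP^{d/2-1}$.

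For part (b) the Hodge polynomial is computed by additivity over the stratification by the four components (first property of Hodge polynomials recalled in section 2), followed by multiplicativity over the $\mathbb A^{d/2-1}$-fibration (locally trivial in the complex topology over a projective base, exactly as in the proof of Proposition \ref{prop4.5}(b)). Using $\e_{\mathbb A^{d/2-1}}(u,v)=(uv)^{d/2-1}$ and the standard Hodge polynomial of $\PP^{d/2-1}$ gives
$$
\e_{G_{0}^{2}(N)}(u,v)=4\cdot(uv)^{d/2-1}\cdot\frac{1-(uv)^{d/2}}{1-uv},
$$
as claimed.

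The only real point to check is the count of square roots and the uniqueness of the associated stable bundle $F_{j}$; everything else is a formal consequence of Lemma \ref{new} and the general properties of Hodge polynomials. In particular, no new geometric construction is required beyond what was done for $G_{0}^{2}$.
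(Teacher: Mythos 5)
Your proof is correct and follows essentially the same route as the paper: the paper also deduces part (a) from the fact that $N$ has exactly four square roots (the four $2$-division points of $\Pic^0(C)$), each giving a unique stable $F$ and hence a unique $E$, and computes (b) exactly as in Proposition \ref{prop4.5}(b). Your write-up merely makes explicit the use of $\Gcd(n/2,d/2)=1$ and \cite[Theorem 7]{at} for the uniqueness of $F_j$, which the paper leaves implicit.
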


\begin{proof}
(a) follows from the fact that there are exactly 4 vector bundles $E$ providing coherent systems $(E,V)$ in $G_0^2(N)$,
since $\Pic^0(C)$ has exactly 4 two-division points. The proof of (b) is analogous to the proof of Proposition \ref{prop4.5} (b).
\end{proof}

\subsection{The spaces $G_0^3$ and $G_0^3(N)$}

\begin{prop} \label{prop4.7}
\emph{(a)} The variety $G_0^3$ is isomorphic to a ${\rm Gr}(2,\frac{d}{2})$-fibration over the curve $C$.\\
\emph{(b)} $$
\e_{G_0^3}(u,v) = (1+u)(1+v)\frac{1-(uv)^{\frac{d}{2}}}{(1-uv)^2(1+uv)}(1-(uv)^{\frac{d}{2}-1}).
$$
\end{prop}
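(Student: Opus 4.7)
The plan is to analyze the stratum fibrewise over $C$ (regarded as the moduli space of stable bundles of rank $\frac{n}{2}$ and degree $\frac{d}{2}$), and then globalize via the Poincar\'e bundle $\cU$.

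Fix a stable bundle $F$ of rank $\frac{n}{2}$ and degree $\frac{d}{2}$, and set $E=F\oplus F$. Then $\End E = \End(F)^{\oplus 4}=\CC^{\oplus 4}$ and $\Aut E \simeq GL(2,\CC)$, acting on $H^0(E)=\CC^2\otimes H^0(F)$ through the first factor. A line $V\subset H^0(E)$ corresponds to a nonzero vector $v\in \CC^2\otimes H^0(F)$, viewed as a $2\times\frac{d}{2}$ matrix. First I would check the $0^+$-stability: the only subbundles $E'\subsetneq E$ with $\mu(E')=\mu(E)$ are the copies of $F$ embedded by maps $s\mapsto(as,bs)$ with $(a,b)\in\CC^2\setminus\{0\}$, and a slope computation shows that $0^+$-stability of $(E,V)$ is equivalent to $V\cap H^0(E')=0$ for every such $E'$. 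This translates exactly into the condition that the associated matrix $v$ has rank $2$.

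Next I would identify the orbit space. Since $GL(2,\CC)$ acts on the matrix $v$ by left multiplication, the orbits of rank--$2$ matrices are classified by their row space, which is an arbitrary $2$-dimensional subspace of $\CC^{d/2}$; thus the quotient of lines $V$ corresponding to rank--$2$ matrices by $\Aut E$ is canonically isomorphic to ${\rm Gr}(2,\frac{d}{2})$. Letting $F$ vary, and using that $\Gcd(\frac{n}{2},\frac{d}{2})=1$ so that a Poincar\'e bundle $\cU$ exists on $C\times C$ (with $p_{2*}\cU$ a vector bundle of rank $\frac{d}{2}$ on $C$ by Lemma \ref{lem2.3}), I would identify $G_0^3$ with the Grassmannian bundle ${\rm Gr}(2,\,p_{2*}\cU)\longrightarrow C$; this yields part (a). This is the part I expect to require the most care, since one needs to check that the fibrewise construction glues into a bundle structure on $G_0^3$, but it follows by the same argument used in Proposition \ref{prop2.1}(2) with ${\rm Gr}(k,d)$ replaced by ${\rm Gr}(2,\frac{d}{2})$.

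For part (b), the Grassmannian bundle is Zariski locally trivial, so by the second property of Hodge polynomials in Section 2,
$$
\e_{G_0^3}(u,v) \;=\; \e_{C}(u,v)\cdot \e_{{\rm Gr}(2,\frac{d}{2})}(u,v).
$$
Substituting $\e_C(u,v)=(1+u)(1+v)$ and, from \eqref{eqn5.1},
$$
\e_{{\rm Gr}(2,\frac{d}{2})}(u,v)=\frac{(1-(uv)^{\frac{d}{2}-1})(1-(uv)^{\frac{d}{2}})}{(1-uv)(1-(uv)^2)},
$$
and using $(1-uv)(1-(uv)^2)=(1-uv)^2(1+uv)$, gives the stated formula.
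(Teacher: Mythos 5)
Your proposal is correct and follows essentially the same route as the paper: the paper likewise observes that $E=F\oplus F$, that $0^+$-stability forces the two components $\sigma_1,\sigma_2$ of a generator of $V$ to be linearly independent, and that modding out by $\mbox{Aut}\,E$ identifies the fibre with the set of $2$-dimensional subspaces of $H^0(F)$, giving a Zariski locally trivial ${\rm Gr}(2,\frac{d}{2})$-fibration over $C$ whose Hodge polynomial is computed exactly as you do. Your write-up merely makes explicit the $GL(2,\CC)$-action and the identification of orbits with row spaces, which the paper leaves implicit.
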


\begin{proof}
Let $(E,V)\in G_0^3$. Then $E = F \oplus F$ for some stable bundle $F$ 
and $V$ is 
generated by a section $\sigma = (\sigma_1,\sigma_2)$ with $\sigma_1, \sigma_2 \in H^0(F)$. Moreover $\sigma_1$, $\sigma_2$ are linearly independent (otherwise there would exist a subsystem $(F,V)$ of $(E,V)$ with $\rk F=1$, contradicting the $0^+$-stability of $(E,V)$).
Hence giving a line $V$ in $H^0(E)$ is equivalent to giving a two-dimensional subspace of $H^0(F)$. 
Identifying the space of stable bundles of rank $\frac{n}{2}$ and degree $\frac{d}{2}$ with the curve $C$ gives assertion (a).
For the proof of (b) we use the fact that any Gr$(2,\frac{d}{2})$-fibration over the curve $C$ is locally trivial in the Zariski topology.
\end{proof}

\begin{prop} \label{prop4.8}
\emph{(a)} The variety $G_0^3(N)$ is isomorphic to the disjoint union of four Grassmannians ${\rm Gr}(2,H^0(F))$:
$$
G_0^3(N) \simeq \sqcup_{(\det F)^2 = N} {\rm Gr}(2,H^0(F)).
$$
\emph{(b)}
$$
\e_{G_0^3(N)}(u,v) = 4 \frac{1-(uv)^{\frac{d}{2}}}{(1-uv)^2(1+uv)}(1-(uv)^{\frac{d}{2}-1}).
$$
\end{prop}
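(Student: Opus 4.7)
The plan is to deduce both parts directly from Proposition \ref{prop4.7} combined with a count of square roots of $N$. First I would observe that for $(E,V)\in G_0^3(N)$ the bundle decomposes as $E=F\oplus F$ with $F$ stable of rank $\frac{n}{2}$ and degree $\frac{d}{2}$, so that $\det E=(\det F)^{\otimes 2}$. Since $\Gcd(n,d)=2$ gives $\Gcd(n/2,d/2)=1$, the stable bundle $F$ is determined up to isomorphism by its determinant in $\Pic^{d/2}(C)$ (by the classification used already in Proposition \ref{prop2.1}). The condition $\det E=N$ therefore becomes $(\det F)^{\otimes 2}=N$, which is satisfied by exactly four line bundles on $C$, corresponding to the four $2$-torsion points of $\Pic^0(C)$.

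Next I would invoke the description from the proof of Proposition \ref{prop4.7}: for a single stable $F$, giving $V\subset H^0(F\oplus F)$ of dimension one (subject to the $0^+$-stability condition that its two components in $H^0(F)$ be linearly independent) is equivalent to giving a $2$-dimensional subspace of $H^0(F)$. Hence the fibre of $G_0^3\to C$ over each of the four distinguished points of $C$ is ${\rm Gr}(2,H^0(F))$, and because distinct choices of $F$ yield non-isomorphic $E$, these four Grassmannians are disjoint components of $G_0^3(N)$. This establishes part (a).

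For part (b), I would observe that $F$ stable of positive slope on the elliptic curve $C$ has $H^1(F)=0$ and hence $h^0(F)=\deg F=\frac{d}{2}$ by Riemann--Roch, so each component is ${\rm Gr}(2,\frac{d}{2})$. Applying formula \eqref{eqn5.1} with $r=2$ and $N=\frac{d}{2}$, and using the identity $1-(uv)^2=(1-uv)(1+uv)$ to rewrite the denominator, the Hodge polynomial of a single Grassmannian is
$$
\frac{(1-(uv)^{\frac{d}{2}-1})(1-(uv)^{\frac{d}{2}})}{(1-uv)^2(1+uv)}.
$$
The additivity of $\e$ over disjoint unions, stated in section 2, then gives the asserted formula on multiplication by $4$.

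There is no serious obstacle here: everything reduces to the two ingredients already in place, namely the isomorphism type of a single fibre from Proposition \ref{prop4.7} and the count of square roots of $N$ in $\Pic^{d/2}(C)$. The only point requiring attention is to verify that Proposition \ref{prop4.7}'s identification of $V$ with a $2$-plane in $H^0(F)$ is compatible with fixing the determinant, which is automatic because $E=F\oplus F$ is intrinsic to the component and $\det E$ only depends on $F$.
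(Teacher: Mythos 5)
Your proof is correct and follows essentially the same route as the paper: the paper's own proof simply says it is the same as that of Proposition \ref{prop4.7} together with the fact that $N$ admits exactly four square roots, which is precisely the argument you spell out (including the identification $h^0(F)=\frac{d}{2}$ and the application of formula \eqref{eqn5.1} with $r=2$). Your extra remarks on why $F$ is determined by its determinant and why the four components are disjoint are accurate elaborations of details the paper leaves implicit.
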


\begin{proof}
The proof is the same as the proof of the previous proposition using the fact that the line bundle $N$ admits exactly 4 square roots.
\end{proof}

\subsection{The Hodge polynomials of $G_0(n,d,1)$ and  $G_0(n,N,1)$} The Hodge polynomial is additive on disjoint unions. 
Hence we conclude from equation \eqref{eq1} adding the formulas of 
Propositions \ref{prop4.1}, \ref{prop4.5} and \ref{prop4.7} (respectively Propositions \ref{prop4.3}, \ref{prop4.6} and \ref{prop4.8})
after multiplying out and simplifying,

\begin{theorem} \label{thm4.9}
\emph{(a)}\\
$\e_{G_0(n,d,1)}(u,v) =$
$$ 
= \frac{(1+u)(1+v)(1-(uv)^{\frac{d}{2}})}{(1-uv)^2(1+uv)}[(u+v)(uv -(uv)^{\frac{d}{2}}) + (1+uv)(1-(uv)^{\frac{d}{2}+1})];
$$
\emph{(b)}\\
$\e_{G_0(n,N,1)}(u,v) =$
$$ 
= \frac{1-(uv)^{\frac{d}{2}}}{(1-uv)^2(1+uv)}[(u+v)(uv -(uv)^{\frac{d}{2}}) + (1+uv)(1-(uv)^{\frac{d}{2}+1})].
$$
\begin{flushright} $\square$ \end{flushright}
\end{theorem}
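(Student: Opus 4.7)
The plan is to exploit the stratifications in \eqref{eq1} together with the first property of Hodge polynomials recalled in Section~2: additivity over disjoint unions of locally closed subvarieties. Since the three strata $G_0^1$, $G_0^2$, $G_0^3$ (respectively $G_0^1(N)$, $G_0^2(N)$, $G_0^3(N)$) partition $G_0(n,d,1)$ (respectively $G_0(n,N,1)$), we have
$$
\e_{G_0(n,d,1)} = \e_{G_0^1} + \e_{G_0^2} + \e_{G_0^3}, \qquad \e_{G_0(n,N,1)} = \e_{G_0^1(N)} + \e_{G_0^2(N)} + \e_{G_0^3(N)},
$$
and each summand on the right has already been computed in Propositions~\ref{prop4.1}, \ref{prop4.5}, \ref{prop4.7} (and \ref{prop4.3}, \ref{prop4.6}, \ref{prop4.8} in the fixed-determinant case). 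Thus the entire content of the theorem is an algebraic identity.

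To carry out the simplification for part (a), I would pull out the common factor $\frac{(1+u)(1+v)(1-(uv)^{d/2})}{(1-uv)^2(1+uv)}$ from all three summands. After doing so, the contribution of $\e_{G_0^1}$ is already in the desired form $(u+v)(uv-(uv)^{d/2})+uv(1-(uv)^{d/2+1})$; the contribution of $\e_{G_0^2}$ reduces to $(uv)^{d/2-1}(1-uv)(1+uv)=(uv)^{d/2-1}-(uv)^{d/2+1}$; and that of $\e_{G_0^3}$ reduces to $1-(uv)^{d/2-1}$. Adding the last two gives $1+uv-(uv)^{d/2+1}-(uv)^{d/2+2}=(1+uv)(1-(uv)^{d/2+1})$, which combines cleanly with the $G_0^1$-term to produce the bracketed expression in (a). Part (b) is obtained by the same strategy with the common factor $\frac{1-(uv)^{d/2}}{(1-uv)^2(1+uv)}$: here the $G_0^1(N)$-contribution provides the $(1-(uv)^{d/2+1})(uv-3)+(u+v)(uv-(uv)^{d/2})$ piece, while the $G_0^2(N)$- and $G_0^3(N)$-contributions collectively supply $+4(1+uv)(1-(uv)^{d/2+1})/((1-uv)(1+uv))$ worth of terms whose addition turns the $uv-3$ inside the bracket into $uv+1$.

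The only real obstacle is the bookkeeping in this algebraic collapse, since the three summands have different denominators and a factor of $(uv)^{d/2\pm 1}$ floating around; but once the common denominator $(1-uv)^2(1+uv)$ is fixed and the telescoping $1+uv-(uv)^{d/2+1}-(uv)^{d/2+2}=(1+uv)(1-(uv)^{d/2+1})$ is spotted, both parts fall out in a single line. No further geometric input is required beyond what was already established for the individual strata.
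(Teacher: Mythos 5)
Your approach is exactly the paper's: its proof consists of a single sentence invoking additivity of the Hodge polynomial over the stratifications \eqref{eq1} and adding the formulas of Propositions \ref{prop4.1}, \ref{prop4.5} and \ref{prop4.7} (respectively \ref{prop4.3}, \ref{prop4.6} and \ref{prop4.8}), so your proposal is correct in substance and in method. Two small bookkeeping slips are worth flagging even though your final combination is right: relative to the common factor, the $G_0^2$- and $G_0^3$-contributions sum to $1-(uv)^{d/2+1}$, not $(1+uv)(1-(uv)^{d/2+1})$ --- the missing piece $uv(1-(uv)^{d/2+1})$ is the term already sitting inside the $G_0^1$-bracket --- and in (b) the two non-generic strata contribute $4(1-(uv)^{d/2+1})$ with no residual $(1-uv)(1+uv)$ in the denominator, which is precisely what converts $uv-3$ into $uv+1$.
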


\begin{rem} \emph{From Theorem \ref{thm4.9} we see that} 
$$
\e_{G_0(n,d,1)} = \e_{G_0(n,N,1)} \cdot \e_C.
$$
\emph{However the morphism}
$$
G_0(n,d,1) \ra C, \quad (E,V) \mapsto \det E,
$$
\emph{whose fibre over} $N \in C$ \em{is} $G_0(n,N,1)$, is not Zariski locally trivial. This follows from the fact that 
$\e_{G_0^1} \neq \e_{G_0^1(N)} \cdot \e_C$ (see Propositions \ref{prop4.1} and \ref{prop4.3}).
\end{rem}

\section{The Hodge polynomial of $G(\alpha;2+ad,d,1)$} 

As outlined in \cite[Section 6]{ln} and described in more generality in \cite{bgn}, 
the moduli spaces $G(\alpha;2+ad,d,1)$ are modified at certain critical values 
$\alpha = \alpha_i$ ($1\le i\le L$) which in this case are given by the following lemma.

\begin{lem}  \label{lemma5.1}
The critical values for coherent systems of type $(2+ad,d,1)$ are given by 
$$
\alpha_i = \frac{d-2d_1}{1+a(d-d_1)}
$$
where $d_1 = [\frac{d-1}{2}] -i+1$ and $0 < i \leq  [\frac{d-1}{2}]$.
\end{lem}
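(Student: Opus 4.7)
The plan is as follows. A value $\alpha_c>0$ is critical for coherent systems of type $(n,d,1)=(2+ad,d,1)$ precisely when there is a proper coherent subsystem $(E_1,V_1)\subset(E,V)$ of type $(n_1,d_1,k_1)$ giving $\mu_{\alpha_c}$-equality; solving this equality forces $\alpha_c=\frac{n_1 d-nd_1}{nk_1-n_1}$. Since $k=1$ restricts $k_1\in\{0,1\}$, and passing to the quotient $(E/E_1,V/V_1)$ interchanges $k_1$ with $1-k_1$ while preserving $\alpha_c$, I need only treat the case $k_1=1$, obtaining
\[
\alpha_c=\frac{n_1 d-nd_1}{n-n_1},\qquad d_1/n_1<d/n.
\]

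Next I would determine the upper end $\alpha_\infty$ of the admissible range of $\alpha$. For a generic $(E,V)\in G(\alpha;n,d,1)$, the semistable bundle $E$ admits nonvanishing sections (by Atiyah's classification, $h^0(E)=d$ is large enough relative to the rank $n=2+ad$ for a generic element of $V$ to be nonvanishing), and for such a section $s$ generating $V$ there is a line subsystem $(\cO,V)\subset(E,V)$. The non-destabilisation condition $\mu_\alpha(\cO,V)<\mu_\alpha(E,V)$ reduces to $\alpha<d/(n-1)=d/(1+ad)$, and this corresponds to the boundary case $(n_1,d_1)=(1,0)$ in the formula above. Hence $\alpha_\infty=d/(1+ad)$, and the interior critical values are exactly those with $0<\alpha_c<\alpha_\infty$.

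Translating the two inequalities $0<\alpha_c<d/(1+ad)$ into constraints on $(n_1,d_1)$ using $n=2+ad$, one obtains
\[
ad_1+\tfrac{2d_1}{d}<n_1\le 1+a d_1.
\]
The length of this interval is $(d-2d_1)/d$, which lies in $(0,1)$ precisely when $1\le d_1\le [\tfrac{d-1}{2}]$, and in that range the unique integer it contains is $n_1=1+a d_1$. For this choice both $\Gcd(n_1,d_1)=1$ and $\Gcd(n-n_1,d-d_1)=1$ hold automatically (since $\Gcd(1+ad_1,d_1)=1$, and similarly for the complementary pair), so by the classification of stable bundles on an elliptic curve there exist stable $E_1,E_2$ of types $(n_1,d_1)$ and $(n-n_1,d-d_1)$; the coherent system $(E_1,V)\oplus(E_2,0)$ is then strictly $\alpha_c$-semistable but not $\alpha_c$-stable, confirming $\alpha_c$ as a genuine critical value.

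Finally, substituting $n_1=1+a d_1$ into $\alpha_c=(n_1 d-n d_1)/(n-n_1)$ simplifies to $\alpha_c=(d-2d_1)/(1+a(d-d_1))$. Since this expression is strictly decreasing in $d_1$, reindexing $d_1=[\tfrac{d-1}{2}]-i+1$ for $1\le i\le[\tfrac{d-1}{2}]$ orders the critical values in increasing $\alpha$, giving the formula of the lemma. The main obstacle will be the rigorous justification of $\alpha_\infty=d/(1+ad)$ together with the verification that no further $(n_1,d_1)$ in the relevant range produce extra critical values; both points should rest on the non-emptiness and existence results of \cite{ln} applied to the subsystem type $(n_1,d_1,1)$.
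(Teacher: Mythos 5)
Your proposal is correct and follows essentially the same route as the paper: write the candidate critical value as $\alpha_c=\frac{n_1d-nd_1}{n-n_1}$, impose $0<\alpha_c<\frac{d}{1+ad}$, observe that the resulting open interval for $n_1$ has length less than $1$ and contains the unique integer $n_1=1+ad_1$ precisely when $d_1\le[\frac{d-1}{2}]$, and reindex by monotonicity in $d_1$. The only difference is that you re-derive the admissible range of $\alpha$ and the general form of the critical values (and verify that each candidate is realised by a strictly semistable system), whereas the paper simply quotes these facts from \cite[section 6]{ln}.
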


\begin{proof}
According to \cite[section 6]{ln} we have
$$
\alpha_i = \frac{n_1d_2-n_2d_1}{n_2}
$$
where $n_1,n_2,d_1,d_2$ are positive integers such that $n_1+n_2 = 2 +ad$ and $d_1+d_2= d$. Moreover $\frac{d_1}{n_1} < \frac{d_2}{n_2}$ and 
$$
0< \alpha_i < \frac{d}{1+ad}.
$$
After substituting $n_2 = 2 + ad -n_1$ and $d_2 = d-d_1$, we get
$$
\alpha_i = \frac{n_1d-d_1(2+ad)}{2+ad-n_1}.
$$
So $\alpha_i > 0$ is equivalent to
$$
n_1 > \frac{2d_1}{d} + d_1a
$$
and $\alpha_i < \frac{d}{1+ad}$ is equivalent to $(n_1d-d_1(2+ad))(1+ad)<d(2+ad-n_1)$ which simplifies to
$$
n_1 < \frac{d_1}{d} + d_1a + 1.
$$
So the only possible value for $n_1$ is 
\begin{equation} \label{eqn3}
 n_1 = d_1a + 1
\end{equation}
and this satisfies the inequalities if and only if $d_1 < \frac{d}{2}$. Writing
\begin{equation} \label{eqn4}
i = \left[ \frac{d-1}{2} \right] - d_1 + 1,
\end{equation}
we obtain the $\alpha_i$ in increasing order of magnitude.
\end{proof}

The moduli spaces $G(\alpha;2+ad,d,1)$ for $\alpha_i < \alpha < \alpha_{i+1}$ are denoted by $G_i$. Note that 
$$
\alpha_L = \frac{d-2}{1+a(d-1)}
$$
and $G(\alpha;2+ad,d,1) = \emptyset$ for $\alpha \geq \frac{d}{1+ad}$. So $G_L$ denotes $G(\alpha;2+ad,d,1)$ for 
$\alpha_L < \alpha < \frac{d}{1+ad}$.

According to \cite[Remark 5.5]{bgn} the moduli space $G_L$ is isomorphic to a $\PP^{d-1}$-bundle over the curve $C$.
Hence
\begin{equation}  \label{eq2}
\e_{G_L}(u,v) = (1+u)(1+v)\frac{1-(uv)^d}{1-uv}
\end{equation}
and for any line bundle $N$ of degree $d$,
\begin{equation}  \label{eq3}
\e_{G_L}(u,v) = \frac{1-(uv)^d}{1-uv}.
\end{equation}

The modifications at $\alpha_i$ are given as follows. There are closed subvarieties $G_i^+$ of $G_i$ and $G_i^-$ of $G_{i-1}$ such that
\begin{equation} \label{eq4}
G_i \setminus G_i^+ \simeq G_{i-1} \setminus G_i^-.
\end{equation} 
These subvarieties are called {\it flip loci} and are described as follows.
The variety $G_i^+$ is given by coherent systems 
$(E,V)$ which are non-trivial extensions
\begin{equation} \label{eq5}
0 \ra (F_2,0) \ra (E,V) \ra (F_1,V_1) \ra 0
\end{equation}
where $F_2$ is a stable bundle of rank $n_2$ and degree $d_2$ and $(F_1,V_1)$ belongs to the moduli space $G(\alpha_i^+;n_1,d_1,1)$ of coherent systems of type $(n_1,d_1,1)$ which are $\alpha$-stable for $\alpha$ slightly greater than $\alpha_i$. Note that by (\ref{eqn3}),
$$ 
n_1 = d_1a +1, \quad  \quad n_2  = d_2a + 1.
$$
Hence $\Gcd(n_2,d_2) = 1$. So the moduli space of the bundles 
$F_2$ can be identified with the curve $C$.

\begin{lem}  \label{lem5.2}
There are no critical values for coherent systems of type $(1+ad_1,d_1,1)$.
\end{lem}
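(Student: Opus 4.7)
The plan is to follow the method of Lemma \ref{lemma5.1} verbatim. By \cite[Section 6]{ln}, any critical value for coherent systems of type $(1+ad_1, d_1, 1)$ has the form
$$\alpha_i = \frac{n_1 \delta_2 - n_2 \delta_1}{n_2}$$
for positive integers $n_1, n_2, \delta_1, \delta_2$ with $n_1 + n_2 = 1 + ad_1$ and $\delta_1 + \delta_2 = d_1$, subject to $\delta_1/n_1 < \delta_2/n_2$ together with the non-emptiness bound
$$0 < \alpha_i < \frac{d_1}{(1+ad_1)-1} = \frac{1}{a},$$
the direct analogue of the bound $d/(1+ad)$ used in Lemma \ref{lemma5.1}. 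Here I have renamed the decomposition degrees $\delta_i$ to avoid clashing with the fixed symbol $d_1$ appearing in the type.

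Next, I would substitute $n_2 = 1+ad_1-n_1$ and $\delta_2 = d_1-\delta_1$ and simplify. The condition $\alpha_i>0$ reduces immediately to $n_1 d_1 > \delta_1(1+ad_1)$, i.e.\ $n_1 > \delta_1/d_1 + a\delta_1$. The condition $\alpha_i < 1/a$, after clearing denominators and factoring out the common factor $1+ad_1$ (exactly as the common factor $2+ad$ is factored out in the proof of Lemma \ref{lemma5.1}), collapses to $n_1 < 1 + a\delta_1$. Thus any admissible $n_1$ would have to be an integer in the open interval
$$\left(a\delta_1 + \frac{\delta_1}{d_1},\; a\delta_1 + 1\right).$$

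The punchline is that this interval contains no integer: for $\delta_1 \ge 1$ it sits strictly between the consecutive integers $a\delta_1$ and $a\delta_1+1$, and for $\delta_1 = 0$ it reduces to $(0,1)$. Hence no admissible $(n_1,\delta_1)$ exists and no critical values arise. The only step requiring real care is the identification of the correct upper bound $1/a$ for admissible $\alpha$; after that the computation is a direct parallel of Lemma \ref{lemma5.1}, with the sole qualitative difference being that the interval which in Lemma \ref{lemma5.1} contained the unique integer $n_1 = 1+ad_1$ is here empty --- ultimately a reflection of the coprimality $\Gcd(1+ad_1, d_1) = 1$.
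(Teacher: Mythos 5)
Your proposal is correct and is essentially the paper's own argument: the same bound $0<\alpha<d_1/(ad_1)=1/a$, the same two inequalities forcing $n_1$ into the empty integer interval $\bigl(a\delta_1+\tfrac{\delta_1}{d_1},\,a\delta_1+1\bigr)$ (the paper writes $m$ and $e$ for your $n_1$ and $\delta_1$, with $0<e<d_1$ so your $\delta_1=0$ case does not even arise). Nothing further is needed.
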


\begin{proof}
According to \cite[section 6]{ln} any critical value is of the form
$$
\alpha = \frac{m(d_1-e)-(ad_1+1-m)e}{ad_1+1-m}
$$
with $0 < m < ad_1+1$ and $ 0 < e < d_1$. Moreover
$$
0 < \alpha < \frac{d_1}{1+ad_1-1} = \frac{1}{a}.
$$
The first inequality is equivalent to 
$$
m > ae + \frac{e}{d_1}
$$
and the second inequality is equivalent to 
$$
m < ae +1,
$$
which gives a contradiction.
\end{proof}

\begin{rem}
\emph{It follows from Proposition \ref{prop2.1} and Lemma \ref{lem5.2} that $G_L(1+ad_1,d_1,1)$ is a $\PP^{d_1-1}$-bundle over $C$.
This is by no means clear from the general structure theorem for $G_L$ (\cite[Theorem 5.4]{bgn})}.
\end{rem}

\begin{prop} \label{prop5.3}
The flip locus $G_i^+$ is a 
$\PP^{d_1 - 1}$-bundle over $C \times G_0(1+ad_1,d_1,1)$ and $G_0(1+ad_1,d_1,1)$ is a $\PP^{d_1-1}$-bundle over $C$.
\end{prop}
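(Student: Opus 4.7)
For the second statement, since $\Gcd(1+ad_1,d_1)=1$, Proposition~\ref{prop2.1}(2) applied with $(n,d,k)=(1+ad_1,d_1,1)$ gives $G_0(1+ad_1,d_1,1)$ as a ${\rm Gr}(1,d_1)=\PP^{d_1-1}$-bundle over $C$. Lemma~\ref{lem5.2} moreover tells us that $G(\alpha_i^+;1+ad_1,d_1,1)=G_0(1+ad_1,d_1,1)$, so this space indeed parametrizes the coherent systems $(F_1,V_1)$ appearing in the extensions \eqref{eq5}.

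For the first statement, the plan is to construct the natural map
$$
G_i^+ \lra C \times G_0(1+ad_1,d_1,1), \qquad (E,V)\mapsto (F_2,(F_1,V_1)),
$$
using $\Gcd(n_2,d_2)=1$ to identify the moduli space of stable bundles of type $(n_2,d_2)$ with $C$. The fibre over $(F_2,(F_1,V_1))$ is the projectivization $\PP(\Ext^1_{\mathrm{cs}}((F_1,V_1),(F_2,0)))$ of the space of extensions in the category of coherent systems, and the goal is to show that it has constant dimension $d_1-1$. A relative $\Ext$ construction over $C \times G_0(1+ad_1,d_1,1)$---using a Poincar\'e bundle for the first factor and a universal coherent system for the second---then produces a locally free sheaf of rank $d_1$ whose projective bundle is $G_i^+$.

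The key computation is the dimension of $\Ext^1_{\mathrm{cs}}((F_1,V_1),(F_2,0))$. Since $V_2=0$, the standard long exact sequence for $\Ext$ in the category of coherent systems reduces to
\begin{multline*}
0 \to \Hom_{\mathrm{cs}}((F_1,V_1),(F_2,0)) \to \Hom(F_1,F_2) \to V_1^*\otimes H^0(F_2) \\
\to \Ext^1_{\mathrm{cs}}((F_1,V_1),(F_2,0)) \to \Ext^1(F_1,F_2) \to \cdots.
\end{multline*}
Three inputs suffice. First, $F_1^*\otimes F_2$ is semistable of positive degree $n_1d_2-n_2d_1=d_2-d_1$, so Riemann-Roch and Serre duality on the elliptic curve give $\Hom(F_1,F_2)=d_2-d_1$, $\Ext^1(F_1,F_2)=0$, and $h^0(F_2)=d_2$. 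Second, $\Hom_{\mathrm{cs}}((F_1,V_1),(F_2,0))=0$: the image of any nonzero morphism would be a coherent subsystem of $(F_2,0)$ and hence have zero $V$-part, so it would be a proper quotient of $(F_1,V_1)$; $\alpha_i^+$-stability of the two systems then forces $\mu_{\alpha_i^+}(F_1,V_1)<\mu_{\alpha_i^+}(F_2,0)$, which contradicts the fact that $\alpha_i^+$ lies strictly above the critical value $\alpha_i$ at which the two slopes agree. Feeding this into the exact sequence gives $\dim\Ext^1_{\mathrm{cs}}=d_2-(d_2-d_1)=d_1$, as required.

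The main obstacle I expect is precisely this vanishing of $\Hom_{\mathrm{cs}}$: the underlying bundles admit a $(d_2-d_1)$-dimensional space of nonzero maps, and the argument has to use the $\alpha_i^+$-stability of the coherent system $(F_1,V_1)$ (not just stability of $F_1$) together with the slope comparison at $\alpha_i$ to rule them all out. The remaining ingredients---Riemann-Roch for semistable bundles on an elliptic curve, the coherent-systems $\Ext$ sequence, and the relative $\Ext$ construction---are standard once the framework is set up.
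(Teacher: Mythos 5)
Your proposal is correct and follows essentially the same route as the paper: identify the fibre of $G_i^+ \to C \times G_0(1+ad_1,d_1,1)$ with $\PP(\Ext^1((F_1,V_1),(F_2,0)))$ and show this $\Ext^1$ has dimension $d_1$, with the key vanishing of $\Hom((F_1,V_1),(F_2,0))$ coming from stability and the slope comparison at $\alpha_i$. The only difference is presentational: the paper quotes the formula $\dim\Ext^1 = C_{12}+\dim\mathbb H^0_{12}+\dim\mathbb H^2_{12}$ from the Bradlow--Garc\'{\i}a-Prada--Mu\~noz--Newstead theory with $C_{12}=d_1$, whereas you unpack the same count from the long exact sequence via Riemann--Roch on the elliptic curve.
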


\begin{proof}
By Lemma \ref{lem5.2} the variety $G(\alpha_i^+;1+ad_1,d_1,1)$ is isomorphic to $G_0(1+ad_1,d_1,1)$ 
which according to Proposition \ref{prop2.1} is a $\PP^{d_1 - 1}$-bundle over $C$. The result now follows from \eqref{eq5} provided 
that
$$
\dim \Ext^1((F_1,V_1),(F_2,0)) = d_1.
$$ 
Now this dimension is given by
$$
C_{12} + \mathbb H^0_{12} + \mathbb H^2_{12}
$$
(see \cite[Proposition 3.2]{bgn}), where $\mathbb H^0_{12} = \Hom((F_1,V_1),(F_2,0))$ and 
$\mathbb H^2_{12} = H^0(F_2^* \otimes N_1)^*$ with $N_1 = \mbox{Ker}(V_1 \otimes \cO_C \ra F_1)$.
Since $(F_1,V_1)$ and $(F_2,0)$ are both $\alpha_i$-stable and non-isomorphic of the same $\alpha_i$-slope, we get
$\mathbb H_{12}^0 = 0$. It is obvious that $N_1=0$ and hence $\mathbb H_{12}^2 = 0$. This completes the proof, since $C_{12} = d_1$ by \cite[equation (13)]{ln}.
\end{proof}

The variety $G_i^-$ is given by coherent systems
$(E,V)$ which are nontrivial extensions
\begin{equation}   \label{eq6}
0 \ra (F_1,V_1) \ra (E,V) \ra (F_2,0) \ra 0
\end{equation}
where $(F_1,V_1)$ and $F_2$ are as above. 

\begin{prop} \label{prop5.4}
The flip locus $G_i^-$ is a 
$\PP^{d - 2d_1 - 1}$-bundle over $C \times G_0(1+ad_1,d_1,1)$ and $G_0(1+ad_1,d_1,1)$ is a $\PP^{d_1-1}$-bundle over $C$.
\end{prop}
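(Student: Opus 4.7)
The proof follows closely the template of Proposition \ref{prop5.3}, with only the direction of the Ext computation altered. The second assertion, that $G_0(1+ad_1,d_1,1)$ is a $\PP^{d_1-1}$-bundle over $C$, is literally identical to the same statement in Proposition \ref{prop5.3}: it follows at once from Proposition \ref{prop2.1}(2) together with Lemma \ref{lem5.2}, which gives $G(\alpha_i^+;1+ad_1,d_1,1) = G_0(1+ad_1,d_1,1)$. So only the first assertion requires work.

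For the first assertion, my plan is to parameterize $G_i^-$ via the extensions \eqref{eq6}. The quotient $F_2$ is a stable bundle of rank $n_2 = 1+ad_2$ and degree $d_2 = d-d_1$; since $\Gcd(n_2,d_2) = 1$, the moduli space of such $F_2$ is identified with $C$. The sub-coherent system $(F_1,V_1)$ ranges over $G(\alpha_i^+;1+ad_1,d_1,1) = G_0(1+ad_1,d_1,1)$. For fixed $F_2$ and $(F_1,V_1)$, non-trivial equivalence classes of extensions \eqref{eq6} are parameterized by $\PP\Ext^1((F_2,0),(F_1,V_1))$, so it suffices to show that this $\Ext^1$ has constant dimension $d-2d_1$ and that these projective spaces glue into an actual bundle over $C\times G_0(1+ad_1,d_1,1)$.

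To evaluate the Ext group I would again invoke \cite[Proposition 3.2]{bgn}, now in the opposite order to Proposition \ref{prop5.3}:
$$
\dim \Ext^1((F_2,0),(F_1,V_1)) = C_{21} + \dim \mathbb H^0_{21} + \dim \mathbb H^2_{21}.
$$
The vanishing $\mathbb H^0_{21} = \Hom((F_2,0),(F_1,V_1)) = 0$ is by exactly the same $\alpha_i$-stability argument as in Proposition \ref{prop5.3}: both objects are $\alpha_i$-stable with the same $\alpha_i$-slope and are non-isomorphic. The vanishing $\mathbb H^2_{21} = 0$ is in fact simpler in this direction, since the relevant kernel $N_2 = \ker(0\otimes\cO_C \to F_2)$ is automatically zero.

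The only genuinely new ingredient, and hence the main obstacle, is the numerical identification $C_{21} = d - 2d_1$. The constant $C_{21}$ is a topological invariant depending only on $(n_1,d_1,1)$, $(n_2,d_2,0)$ and $g=1$, and a calculation entirely parallel to \cite[equation (13)]{ln} — which treated $C_{12}$ and produced the value $d_1$ — now yields $C_{21} = d_2 - d_1 = d - 2d_1$ after substituting $n_i = ad_i + 1$. This gives $\dim \Ext^1 = d-2d_1$ independent of the base point; combined with the standard construction of a projectivized extension bundle (which is valid because all three terms on the right-hand side of the BGN formula are locally constant in the family), one obtains the desired $\PP^{d-2d_1-1}$-bundle structure over $C \times G_0(1+ad_1,d_1,1)$.
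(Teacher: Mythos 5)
Your proposal is correct and follows essentially the same route as the paper: the paper's proof is literally ``the same as Proposition \ref{prop5.3} with $C_{12}$ replaced by $C_{21}=-d_1+d_2=d-2d_1$'' (citing the same equation (13) of \cite{ln} for that value), and your verifications of $\mathbb H^0_{21}=0$ by $\alpha_i$-stability and $\mathbb H^2_{21}=0$ from the vanishing of the relevant kernel are exactly the steps being reused. The only cosmetic difference is that you spell out what the paper leaves implicit.
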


\begin{proof}
The proof is the same as the proof of Proposition \ref{prop5.3} with $C_{12}$ replaced by $C_{21}$, 
which is equal to $-d_1+d_2 = d - 2d_1$ according to \cite[equation (13)]{ln}.
\end{proof}

\begin{lem}  \label{lem5.5}
For every $i = 1, \ldots, L=\left[ \frac{d-1}{2} \right]$ we have
$$
\e_{G_{i-1}}(u,v) = \e_{G_i}(u,v) + \frac{(1+u)^2(1+v)^2(1-(uv)^{d_1})}{(1-uv)^2} \{ (uv)^{d_1} - (uv)^{d-2d_1} \}.
$$
\end{lem}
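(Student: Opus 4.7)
The plan is to use the wall-crossing identity $G_i\setminus G_i^+\simeq G_{i-1}\setminus G_i^-$ displayed in \eqref{eq4}, combined with the additivity of the Hodge polynomial over locally closed stratifications. Since $G_i^+$ is closed in $G_i$ and $G_i^-$ is closed in $G_{i-1}$, additivity gives
$$
\e_{G_i} - \e_{G_i^+} = \e_{G_{i-1}} - \e_{G_i^-},
$$
and rearranging yields
$$
\e_{G_{i-1}} = \e_{G_i} + \e_{G_i^-} - \e_{G_i^+}.
$$
So the entire lemma reduces to computing the two correction terms $\e_{G_i^\pm}$ and subtracting.

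For this I invoke Propositions \ref{prop5.3} and \ref{prop5.4}. Both flip loci are described as projective bundles over $C\times G_0(1+ad_1,d_1,1)$, and $G_0(1+ad_1,d_1,1)$ is itself a $\PP^{d_1-1}$-bundle over $C$. Because all of these are projective bundles (in fact Zariski locally trivial, as they come from projectivizations of Ext-vector bundles in the standard wall-crossing construction), the multiplicativity of $\e$ for projective bundles quoted in section 2 applies. Using $\e_{\PP^{N-1}}=(1-(uv)^N)/(1-uv)$ and $\e_C=(1+u)(1+v)$, I first obtain
$$
\e_{G_0(1+ad_1,d_1,1)}=(1+u)(1+v)\,\frac{1-(uv)^{d_1}}{1-uv},
$$
and then
$$
\e_{G_i^+}=\frac{(1+u)^2(1+v)^2(1-(uv)^{d_1})^2}{(1-uv)^2},\qquad
\e_{G_i^-}=\frac{(1+u)^2(1+v)^2(1-(uv)^{d_1})(1-(uv)^{d-2d_1})}{(1-uv)^2}.
$$

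The last step is a routine subtraction: factor out $(1+u)^2(1+v)^2(1-(uv)^{d_1})/(1-uv)^2$ and note that
$$
(1-(uv)^{d-2d_1}) - (1-(uv)^{d_1}) = (uv)^{d_1} - (uv)^{d-2d_1},
$$
which gives exactly the claimed expression. I do not expect any real obstacle here: the only substantive input is the projective-bundle structure of the flip loci already provided in Propositions \ref{prop5.3} and \ref{prop5.4}, after which everything is bookkeeping with Hodge polynomials of Grassmannians (here just projective spaces) and of the elliptic curve $C$.
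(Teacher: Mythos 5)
Your proposal is correct and follows essentially the same route as the paper: both use the wall-crossing identity \eqref{eq4} together with additivity to reduce to $\e_{G_i^-}-\e_{G_i^+}$, then compute these from the projective-bundle descriptions in Propositions \ref{prop5.3} and \ref{prop5.4} via multiplicativity of the Hodge polynomial. The only difference is that you spell out the intermediate formulas for $\e_{G_i^\pm}$ explicitly, which the paper compresses into a single displayed line.
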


\begin{proof}
According to \eqref{eq4} and Propositions \ref{prop5.3} and \ref{prop5.4} we have
$$
\begin{array}{rcl}
\e_{G_{i-1}} -  \e_{G_i} & = & \e_{G_i^-} - \e_{G_i^+} \\
&=& (1+u)^2(1+v)^2 \frac{1-(uv)^{d_1}}{1-uv}\{ \frac{1-(uv)^{d-2d_1}}{1-uv} - \frac{1-(uv)^{d_1}}{1-uv} \}.
\end{array}
$$
This gives the assertion.
\end{proof}

\begin{theorem} \label{thm5.6}
For $i = 0, \ldots, L$ we have\\ 
$ \displaystyle{\e_{G_i}(u,v) = (1+u)(1+v)\frac{1-(uv)^d}{1-uv} \;+}$
$$
+ \frac{(1+u)^2(1+v)^2(1-(uv)^{\frac{d-\gamma}{2} -i)}}{(1-uv)^2(1-(uv)^2)} 
(uv - (uv)^{\gamma + 2i})(1 - (uv)^{\frac{d-\gamma}{2}-i+1}). 
$$
where $\gamma$ is $1$ if $d$ is odd and $2$ if $d$ is even.
\end{theorem}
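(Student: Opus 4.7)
The plan is to prove Theorem~\ref{thm5.6} by downward induction on $i$, starting from the explicit formula \eqref{eq2} for $\e_{G_L}$ and descending using the recursion provided by Lemma~\ref{lem5.5}.

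First I would check the base case $i = L$. Note that $(d-\gamma)/2 = L$ regardless of the parity of $d$: for $d$ odd one has $\gamma = 1$ and $L = (d-1)/2$, while for $d$ even one has $\gamma = 2$ and $L = (d-2)/2$. Hence the factor $1 - (uv)^{(d-\gamma)/2 - i}$ in the second summand of the claimed formula vanishes at $i = L$, and the right-hand side collapses to $(1+u)(1+v)\frac{1-(uv)^d}{1-uv}$, which matches \eqref{eq2}.

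For the inductive step, assume the formula for $\e_{G_i}$. Writing $t = uv$ and setting $M := L - i + 1$, so that $M = d_1$ in the notation of Lemma~\ref{lem5.5}, and using $\gamma + 2L = d$ to rewrite $\gamma + 2i = d - 2(M-1)$, let $f(i)$ denote the right-hand side of the claimed formula. Then $f(i-1) - f(i)$, after cancelling the $i$-independent first summand and extracting the common coefficient $(1+u)^2(1+v)^2 / [(1-t)^2(1-t^2)]$, equals
\begin{equation*}
(1-t^{M})(t - t^{d-2M})(1-t^{M+1}) - (1-t^{M-1})(t-t^{d-2M+2})(1-t^{M}).
\end{equation*}
On the other hand Lemma~\ref{lem5.5}, once put into the same normalization (multiplying the stated difference by $(1-t^2)/(1-t^2)$), gives $\e_{G_{i-1}} - \e_{G_i}$ as $(1-t^2)(1-t^M)(t^M - t^{d-2M})$. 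So the inductive step reduces to verifying a single polynomial identity between these two expressions.

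The verification is the only nontrivial step, and it is entirely routine. Both sides carry the factor $(1-t^M)$; dividing it out, the identity becomes
\begin{equation*}
(t-t^{d-2M})(1-t^{M+1}) - (1-t^{M-1})(t-t^{d-2M+2}) = (1-t^2)(t^M - t^{d-2M}).
\end{equation*}
Expanding each of the two products on the left produces four monomials; the terms $t$ and $t^{d-M+1}$ cancel under subtraction, and the remainder $t^M - t^{M+2} - t^{d-2M} + t^{d-2M+2}$ factors as $(1-t^2)(t^M - t^{d-2M})$. This closes the induction and establishes the theorem. The main obstacle is thus purely algebraic: organizing the indices so that the recursion of Lemma~\ref{lem5.5} matches a clean telescoping in the closed-form expression. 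No further geometric input beyond the base case \eqref{eq2} and the flip-locus count in Lemma~\ref{lem5.5} is required.
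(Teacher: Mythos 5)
Your proof is correct and follows essentially the same route as the paper: downward induction on $i$ from the base case $\e_{G_L}$ of \eqref{eq2}, using the recursion of Lemma \ref{lem5.5}. The only (inessential) difference is that the paper sums the resulting telescoping series $\sum_{d_1=1}^{(d-\gamma)/2-i}(1-(uv)^{d_1})((uv)^{d_1}-(uv)^{d-2d_1})$ in closed form, whereas you verify that the stated closed form satisfies the recursion --- the same polynomial identity read in the opposite direction.
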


\begin{proof}
By Lemmas \ref{lemma5.1} and \ref{lem5.5} and downwards induction on $i$ we have
$$
\begin{array}{rcl}
\e_{G_i} 
&=& \e_{G_L} + \frac{(1+u)^2(1+v)^2}{(1-uv)^2} 
\sum_{d_1=1}^{\frac{d- \gamma}{2} -i}(1-(uv)^{d_1})((uv)^{d_1} - (uv)^{d-2d_1})).
\end{array}
$$
Now the sum equals\\
$ \sum_{d_1=1}^{\frac{d- \gamma}{2} -i}[(uv)^{d_1} - (uv)^{2d_1} - (uv)^{d -2d_1} + (uv)^{d-d_1}]
= \frac{uv(1-(uv)^{\frac{d-\gamma}{2}-i})}{1-uv} - \\
\hspace*{0.5cm} - \frac{(uv)^2(1-(uv)^{d-\gamma -2i})}{1-(uv)^2}
- \frac{(uv)^{\gamma +2i}(1-(uv)^{d - \gamma -2i})}{1-(uv)^2} 
+ \frac{(uv)^{\frac{d+\gamma}{2} +i}(1-(uv)^{\frac{d-\gamma}{2}-i})}{1-uv}\\
\hspace*{1.5cm} = \frac{1-(uv)^{\frac{d-\gamma}{2}-i}}{1-uv}(uv + (uv)^{\frac{d+\gamma}{2} +i}) - 
\frac{1-(uv)^{d-\gamma -2i}}{1-(uv)^2}((uv)^2 +(uv)^{\gamma + 2i})\\
\hspace*{1.5cm} = \frac{1-(uv)^{\frac{d-\gamma}{2} -i}}{1-(uv)^2} (uv - (uv)^{\gamma + 2i})(1 - (uv)^{\frac{d-\gamma}{2}-i+1}).\\
$
Together with \eqref{eq2} this gives the assertion.
\end{proof}

\begin{rem}
\emph{Note that the formula of the theorem is independent of $a$. This is consistent with the main theorem of \cite{ht}
which implies that the isomorphism class of $G_0(2+ad,d,1)$ is independent of $a$. Our result suggests 
that the isomorphism class of $G_i$ should be independent of $a$ and we prove this in the next section.}
\end{rem}

\begin{rem}
\emph{When $i=0$ and $d$ is odd, the second term in the formula of Theorem \ref{thm5.6} is $0$, so we have
$$
\epsilon_{G_0}(u,v)=(1+u)(1+v)\frac{1-(uv)^d}{1-uv}
$$
in accordance with Proposition \ref{prop2.1}(2). For $i=0$ and $d$ even, it can easily be checked that the formula of Theorem \ref{thm5.6} agrees with Theorem \ref{thm4.9}(a).}
\end{rem}

For coherent systems of fixed determinant $N$ we can define moduli spaces $G_i(N)$ in the same way as we defined the moduli
spaces $G_i$ at the beginning of this section. 

\begin{prop} \label{prop5.9}
For $i = 0, \ldots, L$ we have\\ 
$ \displaystyle{\e_{G_i(N)}(u,v) = \frac{1-(uv)^d}{1-uv} \;+}$
$$
+ \frac{(1+u)(1+v)(1-(uv)^{\frac{d-\gamma}{2} -i)}}{(1-uv)^2(1-(uv)^2)} 
(uv - (uv)^{\gamma + 2i})(1 - (uv)^{\frac{d-\gamma}{2}-i+1}). 
$$
\end{prop}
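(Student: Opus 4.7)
The plan is to follow the proof of Theorem \ref{thm5.6} essentially verbatim, with all moduli spaces replaced by their fixed-determinant analogues. The base case at $i=L$ is provided by equation \eqref{eq3}: $\e_{G_L(N)}(u,v) = \frac{1-(uv)^d}{1-uv}$, and this matches the proposed formula since the second summand vanishes when $\frac{d-\gamma}{2}-L=0$ (true for both parities of $d$ by Lemma \ref{lemma5.1}). What remains is to establish the analog of Lemma \ref{lem5.5} for fixed determinant, and then rerun the telescoping computation.

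For the flip loci, the subvarieties $G_i^\pm(N) \subset G_i^\pm$ are cut out by the determinant constraint $\det E = N$. In the extensions \eqref{eq5} and \eqref{eq6} this translates to $\det F_1 = N \otimes (\det F_2)^{-1}$. Since $\gcd(n_2,d_2)=1$, the bundle $F_2$ still ranges over a curve isomorphic to $C$; for each $F_2$, the pair $(F_1,V_1)$ now lies in the fixed-determinant moduli space $G_0(1+ad_1, \det F_1, 1) \simeq \mathbb{P}^{d_1-1}$ by Proposition \ref{prop2.1}(1). The dimensions of $\Ext^1((F_1,V_1),(F_2,0))$ and $\Ext^1((F_2,0),(F_1,V_1))$ computed in Propositions \ref{prop5.3} and \ref{prop5.4} depend only on the coherent systems $F_1, F_2, V_1$ (the quantities $C_{ij}, \mathbb{H}^0_{ij}, \mathbb{H}^2_{ij}$ are intrinsic to them), so they persist under the determinant constraint. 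Consequently $G_i^+(N)$ is a $\mathbb{P}^{d_1-1}$-bundle over a $\mathbb{P}^{d_1-1}$-bundle over $C$, and $G_i^-(N)$ is a $\mathbb{P}^{d-2d_1-1}$-bundle over a $\mathbb{P}^{d_1-1}$-bundle over $C$. Both structures are Zariski locally trivial, so by the third property of Hodge polynomials in Section 2,
$$
\e_{G_{i-1}(N)}(u,v) - \e_{G_i(N)}(u,v) = \frac{(1+u)(1+v)(1-(uv)^{d_1})}{(1-uv)^2}\bigl((uv)^{d_1} - (uv)^{d-2d_1}\bigr),
$$
which is exactly Lemma \ref{lem5.5} with one factor of $(1+u)(1+v)$ removed.

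Feeding this into the downward induction starting from the base case, the telescoping sum over $d_1 = 1, \ldots, \frac{d-\gamma}{2}-i$ is identical to the one performed at the end of the proof of Theorem \ref{thm5.6}, but stripped of the same factor $(1+u)(1+v)$. Simplifying yields the asserted formula. The only step demanding genuine verification is the claim that the $\Ext^1$-dimensions in Propositions \ref{prop5.3} and \ref{prop5.4} are stable under fixing the determinant; this is the main obstacle, but it is mild, since those dimensions are controlled entirely by hyperext groups of the coherent systems and not by any degree-$d$ data on $E$, so no modification of the calculations is required.
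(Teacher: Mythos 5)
Your proposal is correct and follows essentially the same route as the paper: the paper's own proof is the one-line observation that with fixed determinant $F_2$ is uniquely determined by $F_1$ via $\det F_2\simeq N\otimes(\det F_1)^{-1}$, so the flip loci lose one factor of $C$ (hence one factor of $(1+u)(1+v)$ in Lemma \ref{lem5.5}), after which the telescoping computation of Theorem \ref{thm5.6} runs unchanged. You have merely filled in the details the paper leaves implicit, with an equivalent parametrization of the flip loci.
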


\begin{proof}
Note that in \eqref{eq5} and \eqref{eq6} the bundle $F_2$ is uniquely determined by $F_1$ and the fact that $\det F_2 \simeq N \otimes ( \det F_1)^{-1}$. The rest of the proof of Theorem \ref{thm5.6} goes through exactly as before.
\end{proof}

\section{Isomorphism class of $G_i$}

For any positive integer $a$, let $\Phi_a$ denote the Fourier-Mukai transform defined in \cite[section 3.2]{ht}.
We use the results of the previous section in order to prove the following theorem.

\begin{theorem} \label{thm6.1}
The Fourier-Mukai transform $\Phi_a$ induces an isomorphism of moduli spaces
$$
\Phi_a^0: G_i(2,d,1) \ra G_i(2+ad,d,1)
$$
for every $i$.
\end{theorem}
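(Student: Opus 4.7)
The plan is to define $\Phi_a^0$ as the map on coherent systems induced by the Fourier-Mukai transform $\Phi_a$ of \cite[section 3.2]{ht} and verify that it is an isomorphism for every chamber index $i$. The properties we need are: $\Phi_a$ sends a stable bundle of rank $2$ and degree $d$ to a stable bundle of rank $2+ad$ and degree $d$; it preserves $H^0$, so it carries a coherent system $(E,V)$ of type $(2,d,1)$ to a coherent system of type $(2+ad,d,1)$; and it has a quasi-inverse on the relevant subcategory of $D^b(C)$.

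The first step is to check the correspondence of critical values. Since $\Phi_a$ is an equivalence, a coherent subsystem $(E',V')$ of type $(n',d',k')$ corresponds bijectively to a coherent subsystem of $\Phi_a(E,V)$ of type $(n'+ad',d',k')$. Substituting into the slope formula, the wall for $(E',V')$ in the $(2,d,1)$ picture occurs at $\alpha_{\rm crit} = (dn'-2d')/(2k'-n')$, while the wall for the transformed subsystem in the $(2+ad,d,1)$ picture occurs at $\alpha^*_{\rm crit} = (dn'-2d')/((2k'-n') + a(dk'-d'))$. Specializing to rank-one subsystems $(1,d_1,1)$ as in Lemma \ref{lemma5.1}, these match exactly the critical values $d-2d_1$ and $(d-2d_1)/(1+a(d-d_1))$ respectively. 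Hence $\Phi_a$ matches the $i$-th chamber for $(2,d,1)$ with the $i$-th chamber for $(2+ad,d,1)$ and sends $\alpha$-stable systems to $\alpha^*$-stable systems.

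To upgrade this pointwise bijection to an isomorphism of varieties, $\Phi_a$ is applied in families: the kernel of $\Phi_a$ is a coherent sheaf on $C \times C$, and relative Fourier-Mukai over a local universal family on $G_i(2,d,1)$ produces a flat family of $\alpha^*$-stable coherent systems of type $(2+ad,d,1)$. By the universal property of $G_i(2+ad,d,1)$ this yields a morphism $\Phi_a^0$, and the inverse Fourier-Mukai transform provides a two-sided inverse.

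The main obstacle is ensuring that $\Phi_a$ of a genuine coherent subsystem is again a genuine coherent subsystem, rather than a complex concentrated in several degrees. This is where one uses that all objects in sight lie in the heart of a torsion pair on which $\Phi_a$ acts as an equivalence of abelian categories, so that $\Ext^1$-groups and subobjects are preserved. The verification reduces to inspecting the finitely many types of subsystems occurring in the flip locus descriptions of Propositions \ref{prop5.3} and \ref{prop5.4}, and follows from the stability-preservation results of \cite[Propositions 2.8 and 3.2]{ht}.
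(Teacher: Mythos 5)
Your proposal takes a different route from the paper (which argues by induction on $i$, using the known case $i=0$ from \cite{ht} together with the flip-locus decomposition $G_i\setminus G_i^+\simeq G_{i-1}\setminus G_i^-$ and an explicit analysis of $\Phi_a$ on the extensions \eqref{eq5} and \eqref{eq6}), but it has a genuine gap at its central step. The assertion that ``since $\Phi_a$ is an equivalence, a coherent subsystem $(E',V')$ of type $(n',d',k')$ corresponds bijectively to a coherent subsystem of $\Phi_a(E,V)$ of type $(n'+ad',d',k')$'' is not justified and is false as stated: $\Phi_a$ is an equivalence of derived categories, not of the abelian category of coherent systems. A subbundle $E'\subset E$ may have arbitrarily negative degree and then fails the $IT_0$ condition, so $\Phi_a(E')$ is not a sheaf concentrated in degree $0$; conversely a subsystem of $\Phi_a(E,V)$ need not be $WIT$ for the inverse transform and hence need not arise from a subsystem of $(E,V)$. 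Showing that the \emph{potentially destabilizing} subsystems all lie in the range where the $IT$/$WIT$ conditions hold is exactly the technical content of the stability-preservation theorem of \cite{ht}, and that theorem is proved there only for $0^+$-stability; for $\alpha$ in a higher chamber the set of potentially destabilizing subsystems is different, so it cannot simply be cited. Your wall-matching computation is correct (and is implicit in Lemma \ref{lemma5.1}, which treats all $a$ at once), but it only shows that \emph{if} stability were preserved then the chambers would correspond; it does not prove preservation.

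Your final paragraph does not repair this: the flip loci of Propositions \ref{prop5.3} and \ref{prop5.4} parametrize the coherent systems whose stability status changes at the wall $\alpha_i$, not the subsystems one must test to verify $\alpha$-stability of a given $(E,V)$, so the claimed ``reduction'' conflates two different finite lists. The paper's induction is designed precisely to avoid proving full stability preservation in every chamber: the inductive hypothesis disposes of $G_i\setminus G_i^+\simeq G_{i-1}\setminus G_i^-$, and on the flip loci themselves the objects are explicit extensions of a rank-one coherent system by a stable bundle of coprime type, for which the $IT_0$ and $IT_1$ conditions are immediate from \cite[Propositions 2.10 and 3.7]{ht}. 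To salvage your direct approach you would need to redo the subsystem analysis of \cite{ht} for arbitrary $\alpha$; otherwise the inductive argument is the shorter path.
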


\begin{rem}  \label{rem6.2}
\emph{In the case $i=0$ the theorem is a special case of the main result of \cite{ht} and we use this result in the proof.}
\end{rem}

\begin{proof}
The proof is by induction on $i$, the case $i=0$ being covered by Remark \ref{rem6.2}.

Now suppose $1 \leq i \leq L$ and assume that $\Phi_a^0: G_{i-1}(2,d,1) \ra G_{i-1}(2 +ad,d,1)$ 
is an isomorphism. We shall prove that the Fourier-Mukai transform $\Phi_a$ induces isomorphisms on $G_i^+(2,d,1)$ and 
$G_i^-(2,d,1)$ and this will complete the proof of the theorem.

The elements of $G_i^+$ are given by the exact sequences \eqref{eq5}, where $(F_1,V_1)$ is contained in 
$G_0(1,d_1,1)$ and $F_2$ is a line bundle. According to \cite[Theorem 4.5]{ht} $\Phi_a$ induces an isomorphism 
of moduli spaces $\Phi_a^0: G_0(1,d_1,1) \ra G_0(1+ad_1,d_1,1)$ and by \cite[Proposition 2.10 (1)]{ht} $\Phi_1^0(F_2)$ 
is stable of rank $1 + ad_2$ and degree $d_2$.

Since $F_2$ is $\Phi_a-IT_0$ in the sense of \cite{ht}, we get an exact sequence 
\begin{equation}  \label{eq11}
0 \ra \Phi_a^0(F_2,0) \ra \Phi_a^0(E,V) \ra \Phi_a^0(F_1,V_1) \ra 0.
\end{equation} 
Moreover $\Ext^1((F_1,V_1),(F_2,0)) \simeq \Ext^1(\Phi_a^0(F_1,V_1),\Phi_a^0(F_2,0))$. So we obtain a map
\begin{equation}  \label{eq12}
\Phi_a^0: G_i^+(2,d,1) \ra G_i^+(2+ad,d,1).
\end{equation}
Denote by $\Psi_a^1$ the Fourier-Mukai transform inverse to $\Phi_a^0$ as defined in \cite{ht}. It remains to show that
\begin{equation} \label{eq13}
\Psi_a^1: G_i^+(2+ad,d,1) \ra G_i^+(2,d,1),
\end{equation}
since then by the results of \cite{ht} the map \eqref{eq13} is the inverse of \eqref{eq12}.

So let $(E,V) \in G_i^+(2+ad,d,1)$. It is given by an exact sequence \eqref{eq5} with $(F_1,V_1) \in G(\alpha_i^+;1+ad_1,d_1,1)$ and $F_2$ 
stable of rank $1+ad_2$ and degree $d_2$. By Lemma \ref{lem5.2}, $(F_1,V_1) \in G_0(1+ad_1,d_1,1)$. Since $F_1$ and $F_2$ 
are both semistable of positive degree, they are $\Psi_a-IT_1$ in the sense of \cite{ht}. 
Hence by \cite[Proposition 3.7]{ht},
$\Psi_a^1(F_1,V_1)$ is a coherent system of type $(1,d_1,1)$ and $\Psi_a^1(F_2)$ is a line bundle. 
Moreover we have an exact sequence 
$$
0 \ra \Psi_a^1(F_2,0) \ra \Psi_a^1(E,V) \ra \Psi_a^1(F_1,V_1) \ra 0.
$$  
It follows that $\Psi_a^1(E,V) \in G_i^+(2,d,1)$ which implies the assertion.
This completes the proof for the varieties $G_i^+$. The proof for the $G_i^-$ is the same as for the $G_i^+$.
\end{proof}

\section{Birational type of $G(\alpha;n,d,k)$}

For $\Gcd (n,d) = 1$ and any $k$ we determined in Proposition \ref{prop2.1} and Corollary \ref{cor2.2} the birational 
type of the variety $G_0(n,d,k)$. Since the birational type of $G(\alpha;n,d,k)$ (respectively $G(\alpha;n,N,k)$) 
is independent of $\alpha$ 
(see \cite[Theorem 4.4 (ii)]{ln}), this determines the birational type of all the moduli spaces $G(\alpha;n,d,k)$ 
(respectively $G(\alpha;n,N,k))$  in this case. To summarize we have,

\begin{prop}\label{prop8.1}
If $\emph{\Gcd}(n,d) = 1$ and $k \leq d$, then $G(\alpha;n,N,k)$ is rational and $G(\alpha;n,d,k)$ is birational to $\PP^{k(d-k)} \times C$.
\end{prop}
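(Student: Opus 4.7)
The plan is to reduce the statement to results already established in the paper, with essentially no new calculation required. The key observation is that \cite[Theorem 4.4 (ii)]{ln} (invoked in the paragraph preceding the statement) guarantees that the birational type of $G(\alpha;n,d,k)$, and similarly of $G(\alpha;n,N,k)$, does not depend on $\alpha$. Hence it suffices to verify both claims for the $0^+$-stable moduli spaces $G_0(n,d,k)$ and $G_0(n,N,k)$.

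For the fixed-determinant assertion, I would invoke Proposition \ref{prop2.1}(1), which gives an actual isomorphism $G_0(n,N,k) \simeq \mathrm{Gr}(k,d)$. Since the Grassmannian $\mathrm{Gr}(k,d)$ is rational (it contains an open cell isomorphic to $\mathbb A^{k(d-k)}$), rationality of $G(\alpha;n,N,k)$ follows immediately.

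For the variable-determinant assertion, Proposition \ref{prop2.1}(2) exhibits $G_0(n,d,k)$ as a $\mathrm{Gr}(k,d)$-bundle over $C$ (realized as the Grassmannian bundle of $k$-planes in the fibres of the rank-$d$ vector bundle $p_{2*}\cU$ on $C$). As already observed in the proof of Corollary \ref{cor2.2}, this bundle is Zariski locally trivial, since any vector bundle on the curve $C$ is Zariski locally trivial. Consequently there is a Zariski open $U \subset C$ over which $G_0(n,d,k)$ restricts to $U \times \mathrm{Gr}(k,d)$, so $G_0(n,d,k)$ is birational to $C \times \mathrm{Gr}(k,d)$, and the rationality of $\mathrm{Gr}(k,d)$ together with its dimension $k(d-k)$ yields the desired birational equivalence with $\PP^{k(d-k)} \times C$.

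There is no real obstacle here; the entire content of the proposition is an assembly of Proposition \ref{prop2.1}, the Zariski local triviality used in Corollary \ref{cor2.2}, and the $\alpha$-independence of the birational type from \cite[Theorem 4.4 (ii)]{ln}. The only mild subtlety is to remember that birational equivalence with $\PP^{k(d-k)} \times C$ (rather than with $C \times \mathrm{Gr}(k,d)$) requires replacing the Grassmannian factor by a birational projective space, which is legitimate since birationality of the factors implies birationality of the products over a fixed base once local triviality has been used to pass from the bundle to a product.
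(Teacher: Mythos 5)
Your argument is essentially identical to the paper's: the authors also reduce to $G_0$ via the $\alpha$-independence of the birational type, apply Proposition \ref{prop2.1} together with the Zariski local triviality noted in Corollary \ref{cor2.2}, and their written proof consists only of the one point you leave implicit, namely that $G_0(n,d,k)$ and $G_0(n,N,k)$ are non-empty (citing \cite[Proposition 3.2]{ln}). Since non-emptiness also follows from the identification with the non-empty Grassmannian ${\rm Gr}(k,d)$ for $k\le d$, your proposal is correct and matches the paper's route.
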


\begin{proof}
The only remaining thing to observe is that $G_0(n,d,k)$ and $G_0(n,N,k)$ are non-empty in this case \cite[Proposition 3.2]{ln}.
\end{proof}

Now suppose $\Gcd(n,d) = 2$ and $k = 1$.

\begin{prop} \label{prop6.1}
If $\emph{\Gcd}(n,d) = 2$, then $G(\alpha;n,N,1)$ is rational and $G(\alpha;n,d,1)$ is birational to $\PP^{d-1} \times C$.
\end{prop}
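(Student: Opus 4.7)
The plan is to use \cite[Theorem 4.4 (ii)]{ln} to reduce to the case of small positive $\alpha$, whence it suffices to determine the birational types of $G_0(n,d,1)$ and $G_0(n,N,1)$. Since $G_0^1\subset G_0(n,d,1)$ is dense open (the other two strata $G_0^2,G_0^3$ described in Section 5 have strictly smaller dimension), and similarly for $G_0^1(N)$, Propositions \ref{prop3.1} and \ref{prop3.4} allow me to replace the moduli spaces by the $\ZZ_2$-quotients $\cF/\ZZ_2$ and $\cF_N/\ZZ_2$. Because the projective bundle $P(p_{2*}\cU)\to C$ is Zariski locally trivial, I can trivialise both factors of $\cF$ compatibly; the upshot is that it suffices to prove, birationally,
\[
\mathrm{Sym}^2(C\times\PP^{d/2-1})\sim C\times\PP^{d-1}\qquad\text{and}\qquad (C\times\PP^{d/2-1}\times\PP^{d/2-1})/\ZZ_2\sim \PP^{d-1},
\]
where in the second case $\ZZ_2$ acts on $C$ by the involution $\iota$ with $C/\iota\simeq\PP^1$ and by swap on the two projective factors.

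The key step is a direct function field computation of these quotients. For the fixed determinant case, set $F:=K(C)^\iota=k(\PP^1)$, pick $\mu\in K(C)$ with $\iota(\mu)=-\mu$ (so $\mu^2\in F$), and use coordinates $p_i,q_i$, $1\le i\le n:=d/2-1$, on the two projective factors. The diagonal $\ZZ_2$ fixes
\[
u_i:=p_i+q_i,\qquad s_i:=\frac{p_i-q_i}{p_1-q_1}\ \ (i\ge 2),\qquad \gamma:=\mu\,(p_1-q_1),
\]
because the swap negates each $p_i-q_i$ while $\iota$ negates $\mu$. The identity $\gamma^2=\mu^2\,(p_1-q_1)^2$ expresses $(p_1-q_1)^2$ as $\gamma^2/\mu^2$, so the $\ZZ_2$-invariant field is $F(u_1,\ldots,u_n,s_2,\ldots,s_n,\gamma)$, a purely transcendental extension of $k$ of transcendence degree $d-1$. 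Hence $G_0^1(N)$ is rational, giving the fixed determinant conclusion.

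The varying determinant case is handled by an entirely parallel computation over $K(C\times C)$ in place of $K(C)$. Here $\mu$ is chosen to be swap-antisymmetric in $K(C\times C)$, and $K(C\times C)^{\mathrm{swap}}=K(\mathrm{Sym}^2 C)$ is rational of transcendence degree one over $K(C)$, since the Abel map $\mathrm{Sym}^2 C\to\Pic^2 C\simeq C$ admits the section $c\mapsto\{c,c_0\}$ and is therefore a Zariski locally trivial $\PP^1$-bundle. Introducing a fibre coordinate $t$ for this $\PP^1$-bundle alongside the same $u_i,s_i,\gamma$ produces a purely transcendental extension of $K(C)$ of transcendence degree $d-1$, so $G_0^1\sim C\times\PP^{d-1}$.

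The main obstacle is the $\ZZ_2$-eigenspace bookkeeping: one must verify that the antisymmetric elements $\mu$ (in the curve direction) and $p_1-q_1$ (in the fibre direction) combine multiplicatively into the invariant $\gamma$, whose square absorbs the otherwise independent generator $(p_1-q_1)^2$. This substitution is precisely what eliminates the only quadratic relation and turns the abstract $\ZZ_2$-quotient into a purely transcendental extension of $k$ (respectively of $K(C)$).
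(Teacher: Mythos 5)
Your proof is correct, but it takes a genuinely different route from the paper's, which is two lines long: by Proposition \ref{prop3.2} the birational type of $G_0(n,d,1)$ depends only on $d$ and $\Gcd(n,d)$, so one may take $n=2$; since the birational type is independent of $\alpha$, one passes to $G_L(2,d,1)$, which by \cite[Remark 5.5]{bgn} is a Zariski locally trivial $\PP^{d-1}$-bundle over $C$ (and $G_L(2,N,1)\simeq \PP^{d-1}$, giving the fixed-determinant statement). You instead stay at $\alpha=0^+$ and compute the invariant function fields of $\cF/\ZZ_2$ and $\cF_N/\ZZ_2$ directly. Your computation checks out: with $L:=F(u_1,\dots,u_n,s_2,\dots,s_n,\gamma)$ one has $L\subseteq K^{\ZZ_2}$, $L(\mu)=K$ and $\mu\notin K^{\ZZ_2}$, so $[K:L]=2$ forces $L=K^{\ZZ_2}$; counting the $d-1$ generators against the transcendence degree shows they are algebraically independent, so $L$ is purely transcendental over $k$ (respectively over $K(\Pic^2C)\simeq K(C)$ in the varying-determinant case). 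What your approach buys is independence from \cite[Remark 5.5]{bgn} and the large-$\alpha$ flip machinery, plus an explicit proof of the rationality of these finite quotients of rational varieties, which is not automatic; what it costs is length and the need to check that the trivialisation of $P(p_{2*}\cU)$ is chosen $\ZZ_2$-equivariantly (it is, since the same trivialisation is used on both factors, exactly as in the proof of Proposition \ref{prop3.2}). Two minor remarks: the density of $G_0^1$ follows already from its being a non-empty open subset of the irreducible variety $G_0(n,d,1)$, no dimension count needed; and for the birational statement you only need the generic fibre of $\mathrm{Sym}^2C\to\Pic^2C$ to be a conic with a rational point, which your section supplies.
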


\begin{proof}
By Proposition \ref{prop3.2} we have that $G_0(n,d,1)$ is birational to $G_0(2,d,1)$. 
Since the birational type is independent of $\alpha$, $G_0(2,d,1)$ is birational to $G_L(2,d,1)$ which is a locally 
trivial $\PP^{d-1}$-bundle over $C$ according to \cite[Remark 5.5]{bgn}. 
\end{proof}

\begin{prop}\label{prop8.3}
If $\emph{\Gcd}(n-k,d) = 1$ and $k < min(d,n)$, then $G(\alpha;n,N,k)$ is rational and $G(\alpha;n,d,k)$ is birational to 
$\PP^{k(d-k)} \times C$.
\end{prop}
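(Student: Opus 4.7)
The plan is to parametrize a generic $0^+$-stable coherent system of type $(n,d,k)$ via the exact sequence induced by its evaluation map, and then to recognize the resulting description birationally as a Grassmannian bundle over $C$.

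First, by \cite[Theorem 4.4 (ii)]{ln} the birational type of $G(\alpha;n,d,k)$ (respectively $G(\alpha;n,N,k)$) is independent of $\alpha$, so it suffices to treat $G_0(n,d,k)$ (respectively $G_0(n,N,k)$). For a generic $(E,V)\in G_0(n,d,k)$, since $k<n$ the evaluation map is injective, producing
$$
0\lra V\otimes \cO_C\lra E\lra F\lra 0
$$
with $F$ of rank $n-k$ and degree $d$. Since $\Gcd(n-k,d)=1$, every semistable bundle of type $(n-k,d)$ is stable, and their moduli space is isomorphic to $C$. A dimension count (the locus of non-semistable bundles of this type being of strictly smaller dimension) shows that $F$ is stable for generic $(E,V)$, so the construction gives a rational map $\pi:G_0(n,d,k)\to C$.

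Next, fix a stable $F$ and compute the fibre. Using $K_C=\cO_C$ and Serre duality,
$$
\Ext^1(F,V\otimes\cO_C)\simeq V\otimes H^0(F)^*\simeq \Hom(H^0(F),V);
$$
since $F$ is stable of positive degree, $H^1(F)=0$ and Riemann-Roch yields $\dim H^0(F)=d$, so this space has dimension $kd$. Two extensions produce isomorphic coherent systems exactly when they differ by the natural $GL(V)=GL(k)$-action (the scalar automorphisms $\mbox{Aut}(F)=\CC^*$ being absorbed). Hence the fibre is birational to the quotient by $GL(k)$ of the open set of surjective linear maps $H^0(F)\to V$, which is precisely the Grassmannian $\mbox{Gr}(k,d)$ (non-empty because $k<d$). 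Together with the $1$-dimensional base $C$ this yields total dimension $1+k(d-k)=\beta(d,k)$, matching $\dim G_0(n,d,k)$, so the construction is birational.

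For the fixed-determinant statement, since $\det(V\otimes\cO_C)=\cO_C$, fixing $\det E=N$ amounts to fixing $\det F=N$, which by $\Gcd(n-k,d)=1$ selects a unique stable $F$; thus $G_0(n,N,k)$ is birational to $\mbox{Gr}(k,d)$, which is rational of dimension $k(d-k)$. Allowing $F$ to vary, the family arises from the Grassmannian bundle of $k$-planes in the fibres of $p_{2*}\cU$ over $C$, where $\cU$ is a Poincar\'e bundle of type $(n-k,d)$ on $C\times C$. This Grassmannian bundle is Zariski-locally trivial, by the argument used in the proof of Corollary \ref{cor2.2}, so $G_0(n,d,k)$ is birational to $\mbox{Gr}(k,d)\times C$, and hence to $\PP^{k(d-k)}\times C$. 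The main obstacle will be the stability verification in both directions: that generic $(E,V)\in G_0(n,d,k)$ produces a stable quotient $F$, and conversely that a stable $F$ together with a generic surjective extension class yields a $0^+$-stable coherent system. Both reduce to slope comparisons between subsystems of $(E,V)$ and subbundles of $F$, together with showing that the loci where stability fails have dimension strictly less than $\beta(d,k)$.
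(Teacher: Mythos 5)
Your construction is, at bottom, the same geometry the paper uses: the BGN extension $0\ra V\otimes\cO_C\ra E\ra F\ra 0$ with $F$ stable of type $(n-k,d)$, giving a Zariski-locally-trivial $\mathrm{Gr}(k,d)$-fibration over $C$ (and a single Grassmannian in the fixed-determinant case), combined with the $\alpha$-independence of the birational type. The difference is that the paper obtains this structure in one line by citing \cite[Remark 5.5]{bgn} for the \emph{large}-$\alpha$ moduli space $G_L(n,d,k)$, where this description is a theorem, whereas you rebuild it by hand at $\alpha=0^+$. That choice is what creates the gaps in your write-up.

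Concretely, two steps are asserted or deferred rather than proved. First, the claim that for generic $(E,V)\in G_0(n,d,k)$ the evaluation map $V\otimes\cO_C\ra E$ is injective \emph{with locally free quotient of rank $n-k$ and degree $d$} does not follow from $k<n$ alone: the kernel is a subsheaf of $\cO_C^k$ with no sections, which need not vanish (e.g.\ $\cO_C(-p)\hra\cO_C$), and even when the map is injective the quotient can acquire torsion where the evaluation drops rank. Second, the ``stability verification in both directions'' that you explicitly postpone is the real content of the statement: one must show that a generic extension class in $\Hom(H^0(F),V)$ with $F$ stable yields a $0^+$-stable coherent system, and conversely that the generic point of $G_0$ arises this way. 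Neither is a routine slope comparison at $\alpha=0^+$; indeed when $\Gcd(n,d)=h>1$ the generic $E$ in $G_0$ is a direct sum $F_1\oplus\cdots\oplus F_h$ of stable bundles, and it is not immediate that its evaluation quotient is the stable bundle your parametrization requires. Both difficulties disappear if you run your argument for $G_L$ instead of $G_0$: there \cite[Theorem 5.4 and Remark 5.5]{bgn} say precisely that the points of $G_L(n,d,k)$ are the BGN extensions with $F$ (semi)stable — stable here since $\Gcd(n-k,d)=1$ — modulo the $GL(V)$-action, which is exactly the Grassmannian-fibration picture you want; the $\alpha$-independence of the birational type then transports the conclusion to every $G(\alpha;n,d,k)$. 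With that substitution (or with the citation), your extension-class computation $\Ext^1(F,V\otimes\cO_C)\simeq\Hom(H^0(F),V)$, the identification of the fibre with $\mathrm{Gr}(k,d)$, and the fixed-determinant argument are all correct.
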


\begin{proof}
By \cite[Remark 5.5]{bgn} the moduli space $G_L(n,d,k)$ is a Gr$(k,d)$-fibration over $C$ which is Zariski locally trivial.
Also $G_L(n,N,k) \simeq \mbox{Gr}(k,d)$. Now the result follows from the fact that the birational type is 
independent of $\alpha$. 
\end{proof}

For general $(n,d,k)$ with $\Gcd(n,d) = h>1$ and $k < d$ we have morphisms
$$
G_0(n,d,k) \ra \widetilde{M}(n,d) \quad \mbox{and} \quad G_0(n,N,k) \ra \widetilde{M}(n,N),
$$
where $\widetilde{M}(n,d)$ is the moduli space of S-equivalence classes of semistable bundles of rank 
$n$ and degree $d$ on $C$ and $\widetilde{M}(n,N)$ the subvariety of $\widetilde{M}(n,d)$ with fixed determinant $N$.
From \cite{tu} we have that $\widetilde{M}(n,N) \simeq \PP^{h-1}$ and $\widetilde{M}(n,d) \simeq S^hC$. With these notations, we have

\begin{theorem}\label{th8.4}
For all $\alpha$ for which $G(\alpha;n,d,k)\ne\emptyset$, \\
\emph{(1)} $G(\alpha;n,N,k)$ is birational to a variety $Y_N$, where $Y_N$ is fibred over $\PP^{h-1}$ with general fibre unirational.\\
\emph{(2)} $G(\alpha;n,d,k)$ is birational to a variety $Y$, where $Y$ is fibred over $S^hC$ with general fibre unirational.
\end{theorem}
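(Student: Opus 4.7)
The plan is to exploit the two morphisms
$$G_0(n,d,k)\lra \widetilde M(n,d)\simeq S^hC,\qquad G_0(n,N,k)\lra \widetilde M(n,N)\simeq \PP^{h-1}$$
displayed immediately before the statement. Since the birational type of $G(\alpha;n,d,k)$ is independent of $\alpha$ (whenever the moduli space is non-empty) by \cite[Theorem 4.4(ii)]{ln}, and similarly for $G(\alpha;n,N,k)$, I can take $Y=G_0(n,d,k)$ and $Y_N=G_0(n,N,k)$ and analyse the general fibres of these S-equivalence maps.

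Recall from the beginning of Section 4 that a generic polystable bundle of rank $n$ and degree $d$ has the form $E\simeq F_1\oplus\cdots\oplus F_h$ with the $F_i$ pairwise non-isomorphic stable bundles of rank $n/h$ and degree $d/h$ (with $\otimes_i\det F_i=N$ in the fixed-determinant case). Since $C$ is elliptic, Serre duality and stability give $\Ext^1(F_i,F_j)\simeq\Hom(F_j,F_i)^*=0$ for $i\neq j$, so such an $E$ is the unique semistable bundle in its S-equivalence class; moreover $\mbox{Aut}(E)\simeq(\CC^*)^h$ and $H^0(E)=\bigoplus_i H^0(F_i)$ has dimension $d$. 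Two coherent systems $(E,V)$ and $(E,V')$ with this same $E$ are isomorphic if and only if $V$ and $V'$ lie in the same $\mbox{Aut}(E)$-orbit in $\mbox{Gr}(k,H^0(E))$, so the fibre of the S-equivalence map over the class of $E$ is identified with the open subset of $\mbox{Gr}(k,d)/(\CC^*)^h$ corresponding to $0^+$-stable coherent systems.

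This open subset is non-empty: any subbundle $E'\subset E$ with $\mu(E')=\mu(E)$ is necessarily a sub-direct-sum $\oplus_{i\in I}F_i$ (since the $F_i$ are pairwise non-isomorphic simple objects in the abelian category of semistable bundles of slope $d/n$), and for $V$ sufficiently generic a short dimension count on $V\cap\bigoplus_{i\in I}H^0(F_i)$ prevents any such sub-direct-sum, paired with a suitable $V'\subset V$, from destabilising $(E,V)$ for small positive $\alpha$. Unirationality of $\mbox{Gr}(k,d)/(\CC^*)^h$ is then immediate: $\mbox{Gr}(k,d)$ is rational and the quotient morphism to the geometric quotient is dominant, so the quotient is dominated by a rational variety. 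A dimension check ($k(d-k)-(h-1)$ on the fibre versus $\dim G_0(n,d,k)-\dim S^hC=k(d-k)+1-h$) confirms that we have indeed captured the general fibre.

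The hardest step is the stability verification in the previous paragraph: one must show that for $V$ in a non-empty open subset of $\mbox{Gr}(k,H^0(E))$, no sub-direct-sum $\oplus_{i\in I}F_i$ paired with the induced $V'=V\cap\bigoplus_{i\in I}H^0(F_i)$ violates the $0^+$-slope inequality. Once this is in place, the identification of the general fibre with an open subset of the unirational variety $\mbox{Gr}(k,d)/(\CC^*)^h$ proves both (1) and (2).
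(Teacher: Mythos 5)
Your argument follows the paper's proof essentially verbatim: identify the general fibre of the S-equivalence morphism over a generic polystable $F_1\oplus\cdots\oplus F_h$ with an open subset of $\mbox{Gr}(k,H^0(E))/\mbox{Aut}(E)$, observe that this quotient is unirational because it is dominated by the rational variety $\mbox{Gr}(k,d)$, and transfer the conclusion to arbitrary $\alpha$ via the $\alpha$-independence of the birational type. The only deviation is that where you sketch a direct dimension count to show the $0^+$-stable locus in $\mbox{Gr}(k,H^0(E))$ is non-empty, the paper instead cites non-emptiness of $G_0(n,d,k)$ for $k<d$ together with openness of $\alpha$-stability; both routes work (your count does close, using $k<d$), so this is a cosmetic rather than substantive difference.
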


\begin{proof}
The general point of $\widetilde{M}(n,d)$ is represented by a bundle of the form $F_1 \oplus \cdots \oplus F_h$, where the 
$F_i$ are non-isomorphic stable bundles of rank $\frac{n}{h}$ and degree $\frac{d}{h}$.
To obtain $(E,V) \in G_0(n,d,1)$ we must choose a subspace $V$ of $H^0(E)$ of dimension $k$. 
Since $G_0(n,d,k) \neq \emptyset$ 
if $k < d$ (see \cite{ln}) and $\alpha$-stability is an open condition, the subspace $V$ must belong to a
non-empty Zariski open subset of Gr$(k,H^0(E))$. The condition $(E,V) \simeq (E,V')$ means that $V$ and $V'$ 
are in the same orbit for the action of Aut$E$. A similar statement applies to $G_0(n,N,k)$.
The result follows from the fact that the birational type is independent of $\alpha$.
\end{proof}

\begin{rem}
\emph{So far as we know, it is possible that all the $G(\alpha;n,N,k)$ are rational varieties.}
\end{rem}

\end{document}